\newtheorem{theorem}{Theorem}[section]
\newtheorem{lemma}[theorem]{Lemma}
\theoremstyle{remark}
\begin{document}

\title[Rings $K(s)^*(BG)$]{Morava $K$-theory rings of the extensions of $C_2$ by the products of cyclic 2-groups.}
\author{Malkhaz Bakuradze and Natia Gachechiladze}
\address{Iv. Javakhishvili Tbilisi State University, Faculty of Exact and Natural Sciences}
\email{malkhaz.bakuradze@tsu.ge,  natia.gachechiladze@tsu.ge}
\thanks{First author was supported by Volkswagen Foundation, Ref. 1/84 328 and Rustaveli Foundation grant DI/16/5-103/12.}
\subjclass[2010]{55N20; 55R12; 55R40}
\keywords{Transfer, Morava $K$-theory}

\begin{abstract}
In \cite{SCH1} Schuster proved that $mod$ 2 Morava $K$-theory $K(s)^*(BG)$ is evenly generated for all groups $G$ of order 32.
There exist 51 non-isomorphic groups of order 32. In \cite{H}, these groups are numbered by $1, \cdots ,51$.
For the groups $G_{38},\cdots, G_{41}$, that fit in the title,
the explicit ring structure is determined in \cite{BJ}.
In particular, $K(s)^*(BG)$ is the quotient of a polynomial ring in 6 variables over $K(s)^*(pt)$ by an ideal generated by explicit polynomials. In this article we present some calculations using the same arguments in combination with a theorem of \cite{B0} on good groups in the sense of Hopkins-Kuhn-Ravenel. In particular, we consider the groups $G_{36},G_{37}$, each isomorphic to a semidirect product $(C_4\times C_2\times C_2)\rtimes C_2$, the group $G_{34}\cong (C_4\times C_4)\rtimes C_2$ and its non-split version $G_{35}$. For these groups the action of $C_2$ is diagonal, i.e., simpler than for the groups $G_{38},\cdots, G_{41}$, however the rings $K(s)^*(BG)$ have the same complexity.
\end{abstract}

\maketitle{}

{\centering\section{Introduction}}

In this paper we apply the transfer methods to the study of Morava K-theory rings of finite groups. 

%
%
%
%
%

Among many obvious motivations for studying  Morava K-theory rings of finite groups let us mention that elliptic cohomology rings of classifying spaces of finite groups provide especially suitable “testbed” for studying possible features of tentative elliptic objects, as demonstrated by works of Baker, Thomas, Devoto and others.

For various examples of finite groups the Morava K-theory ring has been shown in the works of N. Yagita to be generated by Chern classes. M. Hopkins, N. Kuhn and D. Ravenel introduced the notion of good groups in the sense that Morava K-theory of $BG$ is generated by transferred Euler classes of complex representations of subgroups of $G$.  This clearly indicates that the part of the relations which can be derived from the properties of the transfer should play decisive role in determining the whole ring structure. Note however not all groups are good by \cite{K}.

In \cite{TY1}, \cite{Y2}, \cite{Y3}, \cite{ST} the multiplicative structure has only been determined modulo certain indeterminacy. In   
\cite{SCH1}, \cite{SCH2}, \cite{SCH3}, \cite{SCH4}, \cite{SCHY} some artificial generators are suggested in order to obtain explicit relation.

 In the works by McClure and Snaith, Hunton and others cohomology groups of homotopy orbit spaces have been constructed, however it is especially interesting to express the ring structure purely in terms of transferred characteristic classes. Thus we are led to consider for finite coverings the interaction of transfers and Chern classes along the lines taken in \cite{BP}. Initial results for some examples given in \cite{BV}, \cite{B1}, \cite{B2} and \cite{BJ} are the first ones to describe multiplicative structure completely in terms of Euler classes and transferred Euler classes

\bigskip

\bigskip

\section{Preliminaries}

Let $K(s)^*(BG)$ be the $s$-th Morava $K$-theory of $BG$ at prime $p=2$. Note that by \cite{JW}, the coefficient ring $K(s)^*(pt)$ is the Laurent polynomial ring in one variable $\mathbb{F}_2[v_s,v_s^{-1}]$, where $\mathbb{F}_2$ is the field of 2 elements and $deg(v_s)=-2(2^s-1)$. So the coefficient ring is a graded field in the sense that all its graded modules are free, therefore Morava $K$-theories enjoy the Kunneth isomorphism. In particular, we have for the cyclic $2$-groups

$$K(s)^*(BC_{2^{n}}\times BC_{2^{m}})=\mathbb{F}_2[v_s,v_s^{-1}][u,v]/(u^{2^{ns}},v^{2^{ms}}),$$
where $x$ and $y$ are Euler classes of canonical complex linear representations.

\medskip

Recall from \cite{HKR} the following definition.

\medskip

a) For a finite group $G$, an element $x\in K(s)^*(BG)$ is good
if it is a transferred Euler class of a complex subrepresentation of $G$, i.e., a class
of the form $Tr^*(e(\rho))$, where $\rho$ is a complex representation of a subgroup $H<G$, $e(\rho)\in K(s)^*(BH)$ is its Euler class (i.e., its top Chern class, this being defined since $K(s)^*$ is a complex oriented theory), and $Tr:BG \to BH$ is the stable transfer map.

(b) $G$ is good if $K(s)^*(BG)$ is spanned by good elements as a $K(s)^*$–module.

\medskip

The good groups in the sense, that  $K(s)^{odd}=0$ (this fact is in principle weaker than being good in the sense of Hopkins-Kuhn-Ravenel) also play a role in the literature \cite{SCH1,SCH2,SCH3,Y3}.

In \cite{K}, Kriz proved a theorem about the Serre spectral sequence \cite{HKR}
\begin{equation*}
E_2=H^*(BC_p,K(s)^*(BH))\Rightarrow K(s)^*(BG)
\end{equation*}
associated to a group extension $1\to H \xrightarrow[]{}G \xrightarrow[]{} C_p\to 1$ for a $p$-group $G$ and normal subgroup $H\lhd G$ with $K(s)^{odd}(BH)=0$.
Namely in \cite{K} Kriz proved that $K(s)^{odd}(BG)=0$ if and only if the integral Morava
$K$-theory $\tilde{K}(s)^*(BH)$ is a permutation module for the action of $G/H \cong C_p$.
Kriz in \cite{K} and Yagita in \cite{Y3} proved that an extension of a cyclic $p$-group by elementary abelian
$p$-group satisfies the even-dimensionality conjecture.
For primes greater than 3, groups of $p$-rank $2$  were shown to have even Morava $K$-
theory by Tezuka-Yagita \cite{TY1} and Yagita \cite{Y2}. Furthermore, Yagita also
proved that these groups are generated by transfered Euler classes and are thus
good in the sense of Hopkins-Kuhn-Ravenel. Hunton \cite{HU} used "unitary-like embeddings" to show that if
$K(s)^*(BH)$ is concentrated in even degrees, then so is $K(s)^*(BH\wr C_p)$. An
independent proof of the same fact in the sense HKR was given in \cite{HKR};

One can consider the good groups in the stronger sense, i.e., $K(s)^*(BG)$ is generated by good elements as a $K(s)^*$-algebra
and all generating relations are consequences of formal properties of the characteristic classes and transfer.

There exist 51 non-isomorphic groups of order 32.  Among these groups,
the groups with numbers $3$, $14$, $16$, $31$, $34$, $39$ and $41$ can be presented as a semidirect product $(C_4\times C_4)\rtimes C_2 $, and the groups  $G=G_{36},G_{37}, G_{38}$- as $(C_4\times C_2 \times C_2)\rtimes C_2$.

We consider the examples $G_{34},\cdots, G_{37}$ to apply the following theorem of \cite{B0} on good groups in the sense HKR, which in our case  reads as

\begin{theorem}
\label{1}
Let $H_i$ and $G_i$ be finite $2$-group, $i=1, 2$, such that $H_i$ is good and  $G_i$ fits into an extension $1\to H_i\to G_i\to C_2 \to 1$.

Let $G$ fits into an extension of the form $1\to H \to G\to C_2 \to 1$, with diagonal conjugation action of $C_2$ on $H=H_1\times H_2 $.  Denote by

$Tr^*=Tr^*_{\varrho}: K(s)^*(BH)\to K(s)^*(BG)$, the transfer homomorphism, associated to $2$-covering $\varrho=\varrho(H,G):BH \to BG$,

$Tr_i^*=Tr^*_{\varrho_i}:K(s)^*(BH_i)\to K(s)^*(BG_i)$, the transfer homomorphism, associated to $2$-covering  $\varrho_i=\varrho(H_i,G_i):BH_i \to BG_i$, $i=1, 2$,

$\rho_i:BG\to BG_i $, the map, induced by projection $H \to H_i$ on $i$-th factor,

and let $\rho^*$ be the restriction of
$$(\rho_1,\rho_2)^*:K(s)^*(BG_1\times BG_2)\to K(s)^*(BG)$$
on $K(s)^*(BG_1)/Im Tr^*_1 \otimes K(s)^*(BG_2)/ImTr^*_2$.  Then

\medskip

i) If $G_i$ are good so is $G$.

ii) $K(s)^*(BG)$ is spanned, as a $K(s)^*(pt)$-module,
by elements of $ImTr^*$ and $Im\rho^*$

\end{theorem}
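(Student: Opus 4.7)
The plan rests on the structural observation that the diagonal $C_2$-action forces $G$ to coincide with the preimage of the diagonal $\Delta(C_2) \subset C_2 \times C_2$ under the surjection $G_1 \times G_2 \twoheadrightarrow C_2 \times C_2$. Consequently $G \triangleleft G_1 \times G_2$ with index $2$, the map $(\rho_1,\rho_2)^*$ is exactly the restriction $res^{G_1 \times G_2}_G$, and naturality of the transfer applied to the pullback squares $BH_i \leftarrow BH \to BG \to BG_i$ yields $\rho_i^*(Im \, Tr_i^*) \subseteq Im \, Tr^*$, so that $\rho^*$ descends to the tensor product of quotients as claimed in the statement.

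For part (ii) I fix $y \in K(s)^*(BG)$ and study $res^G_H(y) \in K(s)^*(BH)$. By Kunneth, $K(s)^*(BH) = K(s)^*(BH_1) \otimes K(s)^*(BH_2)$, and $H$ itself is good because transferred Euler classes on the factors tensor to transferred Euler classes on the product. The class $res^G_H(y)$ is a $C_2$-invariant for the diagonal $G/H$-action. I decompose it as $A+B$, with $A$ in the pure-invariant subspace $K(s)^*(BH_1)^{C_2} \otimes K(s)^*(BH_2)^{C_2}$ and $B$ in the norm image $(1+\tau_H)K(s)^*(BH)$. This decomposition is available because, by Kriz's theorem applied to the good extensions $H_i \triangleleft G_i$, each $K(s)^*(BH_i)$ is a permutation $C_2$-module; tensor products of permutation $C_2$-modules under the diagonal action are again permutation, and invariants of a permutation $C_2$-module are spanned by pure invariants together with the norm image. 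Using the goodness of each $G_i$ (which makes the Serre spectral sequence for $H_i \triangleleft G_i$ collapse and forces $res^{G_i}_{H_i}$ to surject onto $K(s)^*(BH_i)^{C_2}$), I lift $A$ to some $\tilde A \in K(s)^*(BG_1) \otimes K(s)^*(BG_2)$, and I lift $B=(1+\tau_H)\xi$ to $Tr^*(\xi)$ via the push-pull identity $res^G_H \circ Tr^* = 1 + \tau_H$. The residue $z := y - \rho^*(\tilde A) - Tr^*(\xi)$ then satisfies $res^G_H(z)=0$.

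The main obstacle is to prove the inclusion $\ker(res^G_H) \subseteq Im \, Tr^*$, which closes the argument. For this one invokes Kriz's theorem in the other direction: since $K(s)^*(BH)$ is a permutation $C_2$-module, the Serre spectral sequence $E_2^{p,q}=H^p(C_2,K(s)^q(BH)) \Rightarrow K(s)^{p+q}(BG)$ collapses at $E_2$ and $K(s)^{odd}(BG)=0$. Comparing the Serre filtration with the Tate cohomology of $C_2$ acting on $K(s)^*(BH)$, and using that $Tr^*$ realizes the norm map in Tate cohomology, one identifies the higher filtration $F^{\geq 1} = \ker(res^G_H)$ with $Im \, Tr^*$. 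Granting this, $z \in Im \, Tr^*$, and we get $y = \rho^*(\tilde A) + Tr^*(\xi+\eta)$ for some $\eta$, proving (ii).

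Part (i) is then essentially formal. Elements of $Im\,Tr^*$ are good by transitivity of transfer: if $x=tr^H_K(e)$ with $K \leq H$, then $Tr^*(x)=tr^G_K(e)$ is a transferred Euler class of a subgroup of $G$. Elements of $Im\,\rho^*$ are good because, by naturality of transfer in the pullback square for $\rho_i$, one has $\rho_i^*(tr^{G_i}_{K_i}(e_i)) = tr^G_{\rho_i^{-1}(K_i)}(\widetilde{e_i})$, and products of such are again transferred Euler classes by the standard double-coset/Frobenius identity. Hence (ii) implies that $K(s)^*(BG)$ is generated by good elements, which is (i).
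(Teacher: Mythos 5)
The paper does not prove Theorem~\ref{1}; it is quoted from \cite{B0}. Judged on its own terms, your argument has a genuine gap at its central step. Your structural observations are fine: $G$ is indeed the index-$2$ subgroup of $G_1\times G_2$ lying over the diagonal of $C_2\times C_2$, $(\rho_1,\rho_2)^*$ is restriction along that inclusion, and the reduction of (ii) to (a) covering $[K(s)^*(BH)]^{C_2}$ and (b) covering $\ker(\varrho^*)$ is the right skeleton. The fatal claim is ``$\ker(res^G_H)\subseteq Im\,Tr^*$''. This is false for $s>1$, which is precisely the range the paper works in. Frobenius reciprocity $\varrho^*Tr^*=1+t$ shows that the part of $Im\,Tr^*$ lying in $\ker(\varrho^*)$ is $Tr^*(T)$, where $T$ is the trivial summand of the permutation module $K(s)^*(BH)$; and since $Tr^*(1)=v_sc^{2^s-1}$ (the formula the paper itself uses), $Tr^*(T)$ sits in the \emph{top} filtration layer $F^{2^s-1}$ of the Serre spectral sequence. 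Meanwhile $\ker(\varrho^*)=F^1$ has $2^s-1$ layers $E_\infty^{p,*}\cong T$ for $1\le p\le 2^s-1$. Already the smallest instance of the theorem, $H_1=H_2=1$, $G_1=G_2=C_2$, $G\cong C_2$, kills the claim: $\ker(\varrho^*)=(c)\subset K(s)^*[c]/(c^{2^s})$ has rank $2^s-1$ over $K(s)^*$, while $Im\,Tr^*=\langle c^{2^s-1}\rangle$ has rank $1$; the elements $c,\dots,c^{2^s-2}$ are in the kernel of restriction but not in the image of the transfer. Your residue class $z$ therefore need not lie in $Im\,Tr^*$, and (ii) is not established; (i) inherits the gap since you derive it from (ii).

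The intermediate layers $E_\infty^{p,*}$, $1\le p\le 2^s-2$, are exactly what the $Im\,\rho^*$ term in the statement is there to absorb: $c=\rho_1^*(c_1)$ with $c_1\notin Im\,Tr_1^*$, and more generally one must exhibit $c^p\cdot(\text{lifts of }T)$ as $\rho^*$-images of products of classes taken modulo $Im\,Tr_i^*$ in $K(s)^*(BG_i)$. So the argument has to run an induction over the Serre filtration of $\ker(\varrho^*)$: identify the trivial part of $K(s)^*(BH_1)\otimes K(s)^*(BH_2)$ under the diagonal action as $T_1\otimes T_2$ (note, incidentally, that your ``pure-invariant subspace'' should be $T_1\otimes T_2$, not the larger $K(s)^*(BH_1)^{C_2}\otimes K(s)^*(BH_2)^{C_2}$, which is not a complement to the norm image), lift $T_i$ into $K(s)^*(BG_i)$ using $K(s)^{odd}(BG_i)=0$, and multiply by powers of $c$ pulled back through either $\rho_i$; only the top layer $p=2^s-1$ is supplied by $Tr^*(T)$. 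Your appeal to ``$Tr^*$ realizes the norm in Tate cohomology'' conflates the Tate-cohomological norm (which governs $E_2^{p,*}$ for $p\ge 1$ as a quotient of $T$) with the geometric transfer, whose image in $\ker(\varrho^*)$ is concentrated in top filtration. Until the intermediate layers are handled by $Im\,\rho^*$, the proof of (ii) is incomplete.
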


\bigskip

For our examples the action of $G/H\cong C_2$ is diagonal, i.e., is simpler than for the groups $G_{38},\cdots, G_{41}$, \cite{BJ}, but as we shall see the ring structures have the same complexity.

To prove that Theorems \ref{thm:G} and \ref{thm:G34-35} below, indeed give the ring structure we should

\medskip

(i) check the relations stated, and then to establish two facts:

(ii) the classes defined generate, and

(iii) the list of relations is complete.

\medskip

For the first step (i) our tool is the formal properties of the transfer homomorphism.

Let $H\lhd G$ be of index 2. Consider the double covering $\varrho : BH \to BG$. Let
$$Tr^*=Tr^*_{\varrho}=Tr^*(H,G): K(s)^*(BH)\to K(s)^*(BG)$$
be the associated transfer homomorphism induced by the stable transfer map \cite{A}, \cite{KP}, \cite{D}.

For all formal properties enjoyed by the transfer, Frobenius reciprocity and the
double coset formula, the best general reference is \cite{SCH4}.

\medskip

We will need the following transfer formula from \cite{BP}, which does not work for Morava $K$-theory at $s=1$. Therefore we restrict to $s>1$. Also, we set $v_s=1$ throughout the paper.

\medskip

Let $\xi \to BH$ be a complex line bundle and $\xi_{\varrho}=Ind_H^G(\xi)$ be its Atiyah transfer. Then
\begin{equation}
\label{eq:tr}
c_1(\xi_{\varrho})=
c_1(\psi)+\sum_{i=1}^{s-1}c_1(\psi)^{2^s-2^i}c_2(\xi_{\varrho})^{2^{i-1}}
+Tr^*(c_1(\xi)),
\end{equation}
where $\psi \rightarrow BG$ is the pullback of the canonical line bundle over $B\mathbb{Z}/2$ along the map $BG \rightarrow B\mathbb{Z}/2$ classifying $\varrho$.

\medskip

For the Chern classes $u$, $v$ and $Tr^*=Tr^*(H,G)$ the formula \eqref{eq:tr} implies

\begin{equation}
\label{Tru}
Tr^*(u)=c+x_1+\sum_{i=1}^{s-1}c^{2^s-2^i}x_2^{2^{i-1}}
\end{equation}
and

\begin{equation}
\label{Trv}
Tr^*(v)=c+y_1+\sum_{i=1}^{s-1}c^{2^s-2^i}y_2^{2^{i-1}}.
\end{equation}

\medskip

One has the following approximation formula for the formal group law in Morava $K$-theory (\cite{BV}, Lemma 2.2 ii)).

\begin{equation}
\label{eq:FGL}
F(x,y)=x+y+\Phi(x,y)^{2^{s-1}},
\end{equation}
where $\Phi(x,y)=xy+(xy)^{2^{s-1}}(x+y)\,\,\, modulo \,\, (xy)^{2^{s-1}}(x+y)^{2^{s-1}}.$

\bigskip

We also will need the following Lemma \cite{BJ}

\begin{lemma}
\label{lem:zeta^2}
The tensor square of a complex plane vector bundle $\zeta$ has the following total Chern class
$C(\zeta^{\otimes 2})=(1+c_1^2(det\zeta))(1+c_1^{2^s}(\zeta)+c_2^{2^s}(\zeta)).$
\end{lemma}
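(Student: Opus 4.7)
The plan is to invoke the splitting principle, which is legitimate since $K(s)^*$ is a complex oriented theory, and thereby reduce to the case $\zeta=L_1\oplus L_2$ with $L_i$ complex line bundles. Setting $\alpha=c_1(L_1)$ and $\beta=c_1(L_2)$, I then have $c_1(\zeta)=\alpha+\beta$, $c_2(\zeta)=\alpha\beta$, and $c_1(\det\zeta)=F(\alpha,\beta)$, where $F$ denotes the Morava $K(s)$ formal group law.

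Next I expand the tensor square as
\[
\zeta^{\otimes 2}\cong L_1^{\otimes 2}\oplus(L_1\otimes L_2)\oplus(L_2\otimes L_1)\oplus L_2^{\otimes 2},
\]
so that the four Chern roots of $\zeta^{\otimes 2}$ read $[2](\alpha)$, $F(\alpha,\beta)$, $F(\alpha,\beta)$ (using commutativity of $F$), and $[2](\beta)$. The essential formal-group input is the $2$-series identity at $p=2$ in Morava $K$-theory with $v_s=1$, namely $[2](x)=x^{2^s}$, which collapses the first and fourth roots to $\alpha^{2^s}$ and $\beta^{2^s}$. Consequently,
\[
C(\zeta^{\otimes 2})=(1+\alpha^{2^s})(1+F(\alpha,\beta))^{2}(1+\beta^{2^s}).
\]

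The remainder is Frobenius bookkeeping in characteristic $2$. The middle factor simplifies to $(1+F(\alpha,\beta))^{2}=1+F(\alpha,\beta)^{2}=1+c_1(\det\zeta)^{2}$, while the outer product expands as
\[
(1+\alpha^{2^s})(1+\beta^{2^s})=1+(\alpha+\beta)^{2^s}+(\alpha\beta)^{2^s}=1+c_1(\zeta)^{2^s}+c_2(\zeta)^{2^s},
\]
again by iterated Frobenius on the symmetric polynomials. Multiplying the two simplified factors yields the claimed formula. I do not foresee any genuine obstacle: the whole argument hinges on the two clean inputs $[2](x)=x^{2^s}$ and Frobenius in characteristic $2$, together with the splitting principle that converts bundle-level identities into symmetric-function identities in the Chern roots.
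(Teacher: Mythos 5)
Your proof is correct, and the paper itself states this lemma without proof, importing it from the cited reference [BJ], where the argument is exactly the one you give: split $\zeta$ into line bundles, read off the Chern roots $[2](\alpha)$, $F(\alpha,\beta)$, $F(\alpha,\beta)$, $[2](\beta)$ of $\zeta^{\otimes 2}$, and use the $2$-series $[2](x)=x^{2^s}$ (with $v_s=1$) together with Frobenius in characteristic $2$. No discrepancies to report.
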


\bigskip

For the step (ii) our principal tool is the Serre spectral sequence (see \cite{BU1}, \cite{BU2}, \cite{HKR} for details)
\begin{equation}
\{E_2^{*,*}(BG)\}=H^*(BC_2,K(s)^*(BH))\Rightarrow K(s)^*(BG)
\end{equation}
associated to a group extension $1\to H \xrightarrow[]{i}G \xrightarrow[]{\pi} C_2\to 1$.

Here $H^*(BC_2,K(s)^*(BH))$
denotes the ordinary cohomology of $C_2$ with coefficients in the $\mathbb{F}_2[C_2]$-module $K(s)^*(BH)$, where the action of $C_2$
is induced by conjugation in $G$.

As a $C_2$-module $K(s)(BH)$ is a direct sum of free $F$ and trivial $T$ modules.

Here

\begin{equation*}
H^i(BC_2,F)=
\begin{cases}
[F]^{C_2}& \text{for $i=0$} \\
0        & \text{for $i>0$}.
\end{cases}
\end{equation*}

and
$$
H^*(BC_2,T)=H^*(BC_2)\otimes T.
$$

Recall $E_2^{0,*}$ is isomorphic to $[K(s)^*(BH)]^{C_2}$ via $\varrho^*: K(s)^*(BG) \to K(s)^*(BH)$. Also, via $\pi^*$, the spectral sequence $E_2^{*,*}(BG)$ is a module over the Atiyah-Hirzebruch spectral sequence $E_2^{*,*}(BC_2)$ that converges to $K(s)^*(BC_2)$.

The Frobenius reciprocity of the transfer says that the composition $\varrho^*Tr^*$ is the trace map 

\begin{equation}
\label{t}
\varrho^*Tr^*=1+t, \text { where $t$ is the generator $t\in G/H\cong C_p$.}
\end{equation}

Clearly, the trace map $\varrho^*Tr^*$ always maps the good elements of $K(s)^*(BH)$ onto $[F]^{C_2}$.
Hence the group $G$ is good iff $T$ is also covered by $\varrho^*$-images of good elements.

In general if to operate roughly, even  for $s=2$, this requires a serious computational effort, see \cite{SCH4} p.78. One can to simplify this task by suggesting the special bases for particular $C_2$-modules $K(s)^*(BH)$.  This simple but convenient idea first was realized in \cite{BJ}.

Finally one can test (iii) by calculating the rank $\chi_{s}(G)=\frac{1}{2}16^{s}-\frac{1}{2}4^{s}+8^{s}$
from the relations. The answer is calculated in  \cite{SCH1} using the Euler characteristic formula of Hopkins-Kuhn-Ravenel.

\bigskip

\bigskip

\section{Groups $G_{36},\,G_{37}$ }

The Morava ring for $G_{38}$, a semidirect product $\cong (C_4\times C_2\times C_2)\rtimes C_2$ was computed in \cite{BJ}. The relations for the groups $G_{36}$, $G_{37}$ can be treated similarly (see Section 5). The fact that transferred Chern classes generate (see Subsection 5.2) uses rather different arguments.

Let us recall the presentations

\begin{multline*}
\shoveleft{}\\
\shoveleft{G_{36}=\langle \mathbf{a},\mathbf{b},\mathbf{c} \mid  \mathbf{a}^4=\mathbf{b}^4=\mathbf{c}^2=[\mathbf{b},\mathbf{c}]=1, \mathbf{a}^{-1}\mathbf{b}\mathbf{a}=\mathbf{b}^{-1}, \mathbf{c}\mathbf{a}\mathbf{c}=\mathbf{a}^{-1} \rangle ,}\\
\shoveleft{G_{37}=\langle \mathbf{a},\mathbf{b},\mathbf{c} \mid  \mathbf{a}^4=\mathbf{c}^2=\mathbf{d}^2=[\mathbf{b},\mathbf{c}]=1,  \mathbf{d}=[\mathbf{a},\mathbf{c}], \mathbf{b}^2=\mathbf{a}^2, \mathbf{b}\mathbf{a}\mathbf{b}^{-1}=\mathbf{a}^{-1} \rangle .}\\
\end{multline*}
Let $H\cong C_4 \times C_2 \times C_2$ be the maximal abelian subgroup in $G$: $H=\langle \mathbf{b}, \mathbf{c}, \mathbf{a}^2\rangle$  for $G=G_{36}$; $H=\langle \mathbf{b}, \mathbf{c}, \mathbf{d}\rangle$ for $G=G_{37}$.
Let $\lambda$, $\mu$ and $\nu$  
be the complex line bundles over $BH$, the pullbacks of the canonical complex line bundles along the projections onto the first, second and third factor of $H$ respectively, that is,

for $H \lhd G_{36}$, 
$$\lambda(\mathbf{b})=i, \nu(\mathbf{a}^2)=\mu(\mathbf{c})=-1, \lambda(\mathbf{a}^2)=\lambda(\mathbf{c})=\nu(\mathbf{b})=\nu(\mathbf{c})=\mu(\mathbf{b})=\mu(\mathbf{a}^2)=1,$$
and for $H \lhd G_{37}$, 
$$\lambda(\mathbf{b})=i, \mu(\mathbf{c})=\nu(\mathbf{d})=-1, \lambda(\mathbf{c})=\lambda(\mathbf{d})=\mu(\mathbf{b})=\mu(\mathbf{d})=\nu(\mathbf{b})=\nu(\mathbf{c})=1.$$

The quotient of $G$ by the center $Z\cong C_2^2$ is isomorphic to $C_2^3$. The projections of $G$ on three factors of  $C_2^3$ induce line bundles 
$\alpha$, $\beta$ and $\gamma$ over $BG$:
$$\alpha(b)=\beta(c)=\gamma(a)=-1,\,\,\,\alpha(a)=\alpha(c)=\beta(a)=\beta(b)=\gamma(b)=\gamma(c)=1.
$$

\medskip

Let us denote Chern classes by

\begin{equation*}
x_i=c_i(Ind_H^G(\nu)); \,\,\,y_i=c_i(Ind_H^G(\lambda));
\end{equation*}

\begin{equation*}
a=
\begin{cases}
c_1(\alpha), \text{ for $G=G_{36}$ }, \\
c_1(\alpha\gamma), \text{ for $G=G_{37}$},
\end{cases}
\end{equation*}

and for both cases

\begin{equation*}
b=c_1(\beta),\,\,\,c=c_1(\gamma).
\end{equation*}

\medskip

Let $Tr^*: K(s)^*(BH)\to K(s)^*(BG)$ be the transfer homomorphism \cite{A} associated to the double covering $\rho: BH \to BG$ and let
$$
T=Tr^*(uv),
\text{ where } u=c_1(\nu);\,\,\, v=c_1(\lambda).
$$

\medskip

One our main result is the following

\bigskip

\medskip

\begin{theorem}
\label{thm:G}  Let $G$ be one of the groups $G_{36},G_{37}$. Then

i) $K(s)^*(BG)\cong K(s)^*[a,b,c,x_2,y_2,T]/R$, where the ideal $R$ is generated by

$a^{2^s}$, $b^{2^s}$, $c^{2^s}$,

$c(c+x_1+\sum_{i=1}^{s-1}c^{2^s-2^i}x_2^{2^{i-1}})$,
\,\,$c(c+y_1+\sum_{i=1}^{s-1}c^{2^s-2^i}y_2^{2^{i-1}})$,

$a(a+y_1+\sum_{i=1}^{s-1}a^{2^s-2^i}y_2^{2^{i-1}})$,\,\,\,
$b(b+x_1+\sum_{i=1}^{s-1}b^{2^s-2^i}x_2^{2^{i-1}})$,

$(c+y_1+\sum_{i=1}^{s-1}c^{2^s-2^i}y_2^{2^{i-1}})(b+x_1+\sum_{i=1}^{s-1}b^{2^s-2^i}x_2^{2^{i-1}})+b^{2^s-1}T$,

$(c+x_1+v_s\sum_{i=1}^{s-1}c^{2^s-2^i}x_2^{2^{i-1}})(a+y_1+\sum_{i=1}^{s-1}a^{2^s-2^i}y_2^{2^{i-1}})+a^{2^s-1}T$,

$T^2+Tx_1y_1+x_2y_1(c+y_1+\sum_{i=1}^{s-1}c^{2^s-2^i}y_2^{2^{i-1}})+x_1y_2(c+x_1+\sum_{i=1}^{s-1}c^{2^s-2^i}x_2^{2^{i-1}})$,

$T(b+x_1+\sum_{i=1}^{s-1}b^{2^s-2^i}x_2^{2^{i-1}})+b^{2^s-1}x_2(c+y_1)$,

$T(a+y_1+\sum_{i=1}^{s-1}a^{2^s-2^i}y_2^{2^{i-1}})+a^{2^s-1}y_2(c+x_1)$,\,\,\,$cT$, and

\begin{equation*}
x_2^{2^s}+
\begin{cases}
c^2+bc& \text{for $G=G_{36}$},\\
bc & \text {\,\,\,$G=G_{37}$},
\end{cases}
\end{equation*}

\begin{equation*}
y_2^{2^s}+
\begin{cases}
a^2+ac &\text{for $G=G_{36}$},\\
a^2+ac+c^2 &\text{\,\,\,$G=G_{37}$},
\end{cases}
\end{equation*}

where
\begin{equation*}
x_1=(x_2+x_1x_2^{2^{s-1}})^{2^{s-1}}+
\begin{cases}
b & \text{for $G=G_{36}$}\\
b+c+(bc)^{2^{s-1}} & \text{\,\,\,$G=G_{37},$}
\end{cases}
\end{equation*}

\begin{equation*}
y_1=(y_2+y_1y_2^{2^{s-1}})^{2^{s-1}}+
\begin{cases}
c& \text{for $G=G_{36}$} \\
0& \text{\,\,\, $G=G_{37}$}.
\end{cases}
\end{equation*}

ii) Some other relations are

$a^2c=ac^2,\,\,\,$ $b^2c=bc^2,\,\,\,$ $x_1^{2^{s}}=b^{2^{s-1}}c^{2^{s-1}},\,\,\,$ $y_1^{2^{s}}=a^{2^{s-1}}c^{2^{s-1}}.$
\end{theorem}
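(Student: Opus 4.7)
The plan is to follow the three-step strategy explicitly outlined in Section~2: first verify each relation holds in $K(s)^*(BG)$, next exhibit a $K(s)^*$-spanning set matching the claimed monomials in $a,b,c,x_2,y_2,T$, and finally match the rank $\chi_s(G)=\tfrac12 16^s-\tfrac12 4^s+8^s$ computed in \cite{SCH1} to conclude that no further relations are needed.

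For step (i), most relations follow from Frobenius reciprocity combined with formulas \eqref{eq:tr}, \eqref{Tru}, \eqref{Trv} and \eqref{eq:FGL}. The vanishings $a^{2^s}=b^{2^s}=c^{2^s}=0$ are immediate from $a,b,c$ being pulled back from $BC_2$. In both presentations the character $\gamma$ restricts trivially to $H$, so $\varrho^*(c)=0$; Frobenius reciprocity then yields $cT=0$, $c\cdot Tr^*(u)=0$ and $c\cdot Tr^*(v)=0$, which give the four relations of the form $c(c+x_1+\dots)$ and $c(c+y_1+\dots)$ after substituting \eqref{Tru}--\eqref{Trv}. The relations $b(b+x_1+\dots)$ and $a(a+y_1+\dots)$ are produced by repeating this Frobenius calculation with alternative maximal abelian subgroups containing $\mathbf a$ or $\mathbf b$ respectively, and applying \eqref{eq:tr} to the corresponding Atiyah transfers. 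The mixed relations $(c+y_1+\dots)(b+x_1+\dots)+b^{2^s-1}T$ and its $a$-counterpart, as well as the $T^2$-relation, come from expanding $Tr^*(u)\cdot Tr^*(v)$ and $Tr^*(uv)\cdot Tr^*(uv)$ via the double coset formula applied to $\varrho\circ\varrho^*$. The closed forms for $x_2^{2^s}$ and $y_2^{2^s}$ are produced by Lemma~\ref{lem:zeta^2} applied to $\zeta=Ind_H^G(\nu)$ and $\zeta=Ind_H^G(\lambda)$, once the determinant line bundle is identified with an explicit polynomial in $a,b,c$ by restricting to $H$; the implicit expressions for $x_1$ and $y_1$ are then read off from \eqref{eq:FGL} applied to the same determinant. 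The consequences in part (ii) are derived by straightforward manipulation modulo the ideal $R$.

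For step (ii), we invoke Theorem~\ref{1}. For $G_{36}$ set $H_1=\langle\mathbf b\rangle\cong C_4$ and $H_2=\langle\mathbf c,\mathbf a^2\rangle\cong C_2^2$, and for $G_{37}$ set $H_1=\langle\mathbf b\rangle$ and $H_2=\langle\mathbf c,\mathbf d\rangle$; one checks directly from the presentations that the conjugation action of $G/H\cong C_2$ on $H=H_1\times H_2$ is diagonal. The associated overgroups $G_1,G_2$ are good 2-groups of order $\leq 16$ whose Morava $K$-theory rings are known and generated by Chern classes and transferred Euler classes. Theorem~\ref{1} then exhibits $\mathrm{Im}(Tr^*)+\mathrm{Im}(\rho^*)$ as a $K(s)^*$-spanning set for $K(s)^*(BG)$. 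Monomials in $a,b,c,x_2,y_2$ cover $\mathrm{Im}(\rho^*)$ by comparison with the known presentations of $K(s)^*(BG_i)$; monomials involving $T=Tr^*(uv)$ cover $\mathrm{Im}(Tr^*)$ because Frobenius reciprocity together with \eqref{Tru}--\eqref{Trv} reduces any $Tr^*(u^iv^jw^k)$ to a polynomial in $T$ and the pulled-back classes $a,b,c,x_1,x_2,y_1,y_2$. For step (iii), we enumerate a normal-form basis of the quotient ring $K(s)^*[a,b,c,x_2,y_2,T]/R$ using the relations of degree 1 in $T$ to eliminate $T$-monomials and the relations on $x_2^{2^s},y_2^{2^s}$ to bound powers, and check that the resulting count equals $\chi_s(G)$, which forces $R$ to be the full ideal of relations.

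The main obstacle is expected to be step (ii), specifically exhibiting elements of $\mathrm{Im}(\rho^*)$ that realize the classes originating from the trivial summand of the $\mathbb F_2[C_2]$-module $K(s)^*(BH)$ described in Section~2. Although Theorem~\ref{1} guarantees the existence of such elements, expressing them explicitly as polynomials in the six listed generators requires a delicate analysis of the rings $K(s)^*(BG_i)$ and tracking the pullback along $\rho^*$ compatibly with the Serre spectral sequence bookkeeping from \cite{BJ}.
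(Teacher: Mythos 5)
Your overall strategy is the same as the paper's: verify the relations with the transfer formula \eqref{eq:tr}, Frobenius reciprocity, the double coset formula and Lemma~\ref{lem:zeta^2}; obtain generation from Theorem~\ref{1} applied to a diagonal decomposition of $H$ (your choice $H_1=\langle\mathbf b\rangle$, $H_2=\langle\mathbf c,\mathbf a^2\rangle$, resp.\ $\langle\mathbf c,\mathbf d\rangle$, is exactly the decomposition underlying Lemma~\ref{generatorsForG34-37}); and conclude completeness by matching the Euler characteristic \eqref{xi} against an explicit normal-form basis, as in Lemma~\ref{basis36-37}. Two steps of your part (i), however, would fail as literally stated.

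First, you propose to get $a(a+y_1+\cdots)=0$ and $b(b+x_1+\cdots)=0$ by ``repeating the Frobenius calculation with alternative maximal abelian subgroups containing $\mathbf a$ or $\mathbf b$.'' No such subgroups exist: in $G_{36}$ the centralizer of $\mathbf a$ is $\langle\mathbf a,\mathbf b^2\rangle$ of order $8$, so every abelian subgroup containing $\mathbf a$ has index at least $4$ and formula \eqref{eq:tr}, which is for double coverings, does not apply to it. The correct move (Lemmas~\ref{lem:LAMBDA-36}--\ref{lem:NU-37}) is to re-realize $Ind_H^G\lambda$ and $Ind_H^G\nu$ as bundles induced from \emph{non-abelian} index-$2$ subgroups such as $H'=\langle\mathbf b^2,\mathbf c,\mathbf a\rangle$ and $K'=\langle\mathbf b,\mathbf a\rangle$, over which $\alpha$ (resp.\ $\beta$) restricts trivially; then $a\cdot Tr^{\prime*}(c_1(\lambda'))=Tr^{\prime*}(0)=0$. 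Second, the relations $(c+y_1+\cdots)(b+x_1+\cdots)+b^{2^s-1}T$ and its $a$-counterpart do not come from expanding $Tr^*_\gamma(u)\cdot Tr^*_\gamma(v)$: that product equals $Tr^*(uv)+Tr^*(uv')$ and is what feeds the $T^2$ relation. The left-hand side is a product of Euler-class expressions attached to two \emph{different} double coverings, $Tr^*_\gamma(v)\cdot Tr^*_\beta(c_1(\nu'))$, and one must apply the double coset formula to the pullback square \eqref{eq:DG36b} over the intersection $\langle\mathbf b,\mathbf a^2\rangle$; the factor $b^{2^s-1}=Tr^*_\mu(1)$ arises precisely from that square, and would not appear in your version. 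With these two corrections (and the analogous subgroups for $G_{37}$), your plan coincides with the paper's proof.
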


\bigskip

%
%
%
%
%
%
%
%
%
%
%

\bigskip

\section{Group $G_{34}$ and its non-split version $G_{35}$}

Presentations of $G_{34}$ and $G_{35}$ are as follows:
\begin{multline*}
\shoveleft{}\\
\shoveleft{G_{34}=\langle \mathbf{a},\mathbf{b},\mathbf{c} \mid  \mathbf{a}^4=\mathbf{b}^4=\mathbf{c}^2=[\mathbf{a},\mathbf{b}]=1, \mathbf{c}\mathbf{a}\mathbf{c}=\mathbf{a}^{-1}, \mathbf{c}\mathbf{b}\mathbf{c}=\mathbf{b}^{-1} \rangle ,}\\
\shoveleft{G_{35}=\langle \mathbf{a},\mathbf{b},\mathbf{c} \mid  \mathbf{a}^4=\mathbf{b}^4=[\mathbf{a},\mathbf{b}]=1,\mathbf{c}^2=\mathbf{a}^2, \mathbf{c}\mathbf{a}\mathbf{c}^{-1}=\mathbf{a}^{-1},\mathbf{c}\mathbf{b}\mathbf{c}^{-1}=\mathbf{b}^{-1} \rangle .}\\
\end{multline*}
Let $G$ be either $G_{34}$ or $G_{35}$ and let $H=\langle \mathbf{a},\mathbf{b}\rangle\cong C_4 \times C_4 $ be the maximal abelian subgroup in $G$.
Let $\lambda, \nu\to BH$ denote the pullbacks of the canonical complex line bundles along the projections onto the first and second  factor of $H$ respectively:
$$\lambda(\mathbf{a})=\nu(\mathbf{b})=i,\,\,\, \lambda(\mathbf{b})=\lambda(c)=\nu(\mathbf{a})=\nu(\mathbf{c})=1.$$

The quotient of $G$ by the center $Z\cong C_2^2$ is isomorphic to $C_2^3$. The projections of $G$ on the three factors induce three line bundles 
$\alpha$, $\beta$ and $\gamma$ over $BG$: 

$$\alpha(a)=\beta(b)=\gamma(c)=-1,\,\,\,\alpha(b)=\alpha(c)=\beta(a)=\beta(c)=\gamma(a)=\gamma(b)=1.$$

For both cases let us denote Chern classes by

\begin{equation*}
x_i=c_i(Ind_H^G(\lambda)); \,\,\,y_i=c_i(Ind_H^G(\nu));
\end{equation*}

\begin{equation*}
a=c_1(\alpha),\,\,\,b=c_1(\beta),\,\,\,c=c_1(\gamma).
\end{equation*}

\medskip

Let $Tr^*: K(s)^*(BH)\to K(s)^*(BG)$ be the transfer homomorphism \cite{A} associated to the double covering $\rho: BH \to BG$ and let
$$
T=Tr^*(uv),
\text{ where } u=c_1(\lambda);\,\,\, v=c_1(\nu).
$$

\medskip

\begin{theorem}
\label{thm:G34-35}  Let $G$ be one of the groups $G_{34},G_{35}$. Then

i) $K(s)^*(BG)\cong K(s)^*[a,b,c,x_2,y_2,T]/R$, where the ideal $R$ is generated by

$a^{2^s}$, $b^{2^s}$, $c^{2^s}$,

$c(c+x_1+\sum_{i=1}^{s-1}c^{2^s-2^i}x_2^{2^{i-1}})$,
\,\,$c(c+y_1+\sum_{i=1}^{s-1}c^{2^s-2^i}y_2^{2^{i-1}})$,

$a(a+y_1+\sum_{i=1}^{s-1}a^{2^s-2^i}y_2^{2^{i-1}})$,\,\,\,
$b(b+x_1+\sum_{i=1}^{s-1}b^{2^s-2^i}x_2^{2^{i-1}})$,

$(c+x_1+\sum_{i=1}^{s-1}c^{2^s-2^i}x_2^{2^{i-1}})(b+y_1+\sum_{i=1}^{s-1}b^{2^s-2^i}y_2^{2^{i-1}})+b^{2^s-1}T$,

$(c+y_1+\sum_{i=1}^{s-1}c^{2^s-2^i}y_2^{2^{i-1}})(a+x_1+\sum_{i=1}^{s-1}a^{2^s-2^i}x_2^{2^{i-1}})+a^{2^s-1}T$,

$T^2+Tx_1y_1+x_2y_1(c+y_1+\sum_{i=1}^{s-1}c^{2^s-2^i}y_2^{2^{i-1}})+x_1y_2(c+x_1+\sum_{i=1}^{s-1}c^{2^s-2^i}x_2^{2^{i-1}})$,

$T(a+x_1+\sum_{i=1}^{s-1}a^{2^s-2^i}x_2^{2^{i-1}})+a^{2^s-1}x_2(c+y_1)$,

$T(b+y_1+\sum_{i=1}^{s-1}b^{2^s-2^i}y_2^{2^{i-1}})+b^{2^s-1}y_2(c+x_1)$,\,\,\,$cT$,

$x_2^{2^s}+b^2+bc$,\,\,\, $y_2^{2^s}+a^2+ac,$

\medskip

where
\begin{equation*}
x_1=(x_2+x_1x_2^{2^{s-1}})^{2^{s-1}}+
\begin{cases}
c & \text{for $G=G_{34}$}\\
0 & \text{\,\,\,$G=G_{35}$}
\end{cases}
\end{equation*}

and
\begin{equation*}
y_1=c+(y_2+y_1y_2^{2^{s-1}})^{2^{s-1}},\text{ for both cases.}
\end{equation*}

\medskip

ii) Some other relations are

$a^2c=ac^2,\,\,\,$ $b^2c=bc^2,\,\,\,$ $x_1^{2^{s}}=b^{2^{s-1}}c^{2^{s-1}},\,\,\,$ $y_1^{2^{s}}=a^{2^{s-1}}c^{2^{s-1}}.$
\end{theorem}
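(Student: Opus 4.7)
The plan is to follow the three-step blueprint $(i)$--$(iii)$ laid out in Section 2, with the proof running closely parallel to the treatment of $G_{36}, G_{37}$ in Theorem \ref{thm:G}. The only structural differences are that the maximal abelian subgroup is now $H\cong C_4\times C_4$ (rather than $C_4\times C_2\times C_2$) and that for $G_{35}$ the extension $1\to H\to G\to C_2\to 1$ is non-split.

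For step $(i)$, I would derive the formulas for $x_1, y_1$ from the transfer formula \eqref{eq:tr} applied to $Ind_H^G\lambda$ and $Ind_H^G\nu$, using the formal-group-law approximation \eqref{eq:FGL} to simplify the $2^{s-1}$-power tail (this is exactly the mechanism behind \eqref{Tru} and \eqref{Trv}). The nilpotence relations $a^{2^s}=b^{2^s}=c^{2^s}=0$ are automatic for line bundles pulled back from $BC_2$. The vanishing factorizations $c(c+x_1+\cdots), a(a+y_1+\cdots), b(b+x_1+\cdots)$ follow from the double coset formula together with the Frobenius identity \eqref{t}, once one observes that on suitable index-$2$ subgroups the induced rank-two bundle splits off a trivial summand. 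The mixed relations pairing $T$ with the linear factors are instances of Frobenius reciprocity $Tr^*(u\cdot\varrho^*\xi)=Tr^*(u)\cdot\xi$, and the $T^2$ identity is obtained similarly from $T^2=T\cdot Tr^*(uv)=Tr^*(uv\cdot\varrho^*T)$. The top relations $x_2^{2^s}+b^2+bc=0$ and $y_2^{2^s}+a^2+ac=0$ are Lemma \ref{lem:zeta^2} applied to $\zeta=Ind_H^G\lambda$ and $\zeta=Ind_H^G\nu$, after identifying their determinant line bundles in terms of $\alpha, \beta, \gamma$ from the character data defining the three projections $G\to C_2$.

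For step $(ii)$, I would apply Theorem \ref{1}: the diagonal $C_2$-action realizes $G_{34}$ as the pullback $D_8\times_{C_2}D_8$, and $G_{35}$ as $Q_8\times_{C_2}D_8$, where the $Q_8$ factor is obtained by killing $\langle\mathbf{b}\rangle$ (which preserves the non-trivial relation $c^2=\mathbf{a}^2$) and the $D_8$ factor by killing $\langle\mathbf{a}\rangle$. Both $D_8$ and $Q_8$ are good in the sense of Hopkins-Kuhn-Ravenel with explicitly known Morava $K$-theory rings generated by Chern classes and transferred Euler classes, so Theorem \ref{1} gives that $K(s)^*(BG)$ is spanned as a $K(s)^*$-module by $Im\,Tr^*\cup Im\,\rho^*$; tracking the pullbacks through $\rho_1,\rho_2$ and adjoining $T=Tr^*(uv)$ identifies the generating set with $\{a,b,c,x_2,y_2,T\}$.

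For step $(iii)$, I would exhibit an explicit $K(s)^*$-basis of $K(s)^*[a,b,c,x_2,y_2,T]/R$: the $T^2$-identity reduces to monomials linear in $T$, nilpotence caps the exponents of $a,b,c$ at $2^s-1$, and $x_2^{2^s}, y_2^{2^s}\in(a,b,c)$ cap the exponents of $x_2,y_2$. A count of the remaining monomials, accounting for $cT=0$ and the various factorization relations, should recover $\chi_s(G)=\tfrac12 16^s-\tfrac12 4^s+8^s$ as computed in \cite{SCH1}, proving the presentation is complete. The main obstacle is step $(ii)$: Theorem \ref{1} controls the module structure, but to promote it to a full presentation one must pin down exactly which good elements cover the trivial $C_2$-summands of $K(s)^*(BH)$ appearing on the $E_2$-page of the Serre spectral sequence; in the non-split case $G_{35}$ this requires a careful analysis of the conjugation action on $K(s)^*(BH)$ using the relation $c^2=\mathbf{a}^2$, along the lines of the adapted-basis method introduced in \cite{BJ}.
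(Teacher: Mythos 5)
Your proposal follows the paper's own route essentially verbatim: the relations are checked with the transfer formula \eqref{eq:tr}, Frobenius reciprocity, the double coset formula and Lemma \ref{lem:zeta^2} together with the re-induction lemmas for $\rho_!\lambda$ and $\rho_!\nu$; generation is Theorem \ref{1} applied to the fibre-product description of $G_{34}$ and $G_{35}$ over $C_2$ (the paper phrases this as $K(s)^*(BH)\cong M\otimes M$ with $M=K(s)^*(BC_4)$ and the $D_8$-type inversion action, which is literally the same $C_2$-module for both groups since conjugation does not see the extension class); and completeness is the Euler-characteristic count against \cite{SCH1}. The one small correction: the closed formulas for $x_1$ and $y_1$ come from applying the formal group law \eqref{eq:FGL} to the Euler classes of $\det\rho_!\lambda$ and $\det\rho_!\nu$ (which is exactly where the split/non-split distinction enters, via $\det\rho_!\lambda=\gamma$ versus $1$), not from the transfer formula \eqref{eq:tr}, which instead produces the linear factors appearing in the fourth through seventh relations.
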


\bigskip

\bigskip

\bigskip

\section{Proof of Theorem \ref{thm:G} }
\subsection*{Bundle relations}

Let us give some relations of bundles over $BG$ we will need. We omit the proofs since they are completely standard and easily follow from the definitions and from Frobenius reciprocity of the transfer in complex $K$-theory.

Let $\rho:BH \rightarrow BG$ be the double covering $\rho=\rho(H,G)$ and let $\rho_!\lambda=Ind^G_H(\lambda)$ and $\rho_!\nu=Ind^G_H(\nu)$ in both cases. Both groups have the same character table. The only difference is in the determinants of $\rho_!\lambda$ and $\rho_!\nu$, while restrictions, products, and two lemmas below are the same. Namely

\medskip
\begin{align*}
&det(\rho_!\lambda)=\gamma, \,\,\, det(\rho_!\nu)=\beta, &&\text{ for }G=G_{36},\\
&det(\rho_!\lambda)=1, \,\,\, det(\rho_!\nu)=\beta \gamma, && \text{ for } G=G_{37},
\end{align*}

\medskip

and for both cases is easily checked the following
\begin{align*}
&\text{Restrictions}                                                                     && \text{Product relations}  \\
i)\,\,\,\, \rho^*\alpha &=\lambda^ {2}, \,\,\, \rho^*\beta=\mu, \,\,\, \rho^*\gamma=1; &&  iii) \,\,\,\, \alpha \rho_!\lambda=\rho_!\lambda,\,\,\,\gamma \rho_!\lambda=\rho_!\lambda ;\\
ii)\,\,\,\, \rho^*(\rho_!\lambda) &=\lambda +\lambda^3,\,\,\, \rho^*(\rho_!\nu)=\nu + \nu \mu; &&  iv)  \,\,\,\,  \beta \rho_!\nu=\rho_!\nu,\,\,\,\gamma \rho_!\nu=\rho_!\nu; \\
&                                                                                 &&  v)\,\,\,\,(\rho_!\nu)^2=1 + \beta + \gamma + \beta \gamma ;\\                                                  &                                                                                && vi)\,\,\,\,(\rho_!\lambda)^2=1+ \alpha + \gamma + \alpha \gamma.
\end{align*}

The first relation of iii) suggests that $\rho_!\lambda$ should be also a transfer line bundle over the total space of a  $2$-covering corresponding to $\alpha$. Namely we have

\medskip

\begin{lemma}
\label{lem:LAMBDA-36}
Let $G=G_{36}$. There exists a subgroup $H'\subset G_{36}$ and a line bundle $\lambda' \to BH'$ such that $Ind_H^{G}(\lambda)=Ind_{H'}^{G}(\lambda').$ The bundle $\alpha$ restricts to trivial bundle over $BH'$.
\end{lemma}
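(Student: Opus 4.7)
The plan is to exploit the relation $\alpha\cdot Ind^G_H(\lambda)=Ind^G_H(\lambda)$ from iii), which by Clifford-type reasoning suggests that $Ind^G_H(\lambda)$ ought to be induced from a line bundle on a subgroup where $\alpha$ is trivial --- namely $H':=\ker(\alpha\colon G\to C_2)$. Explicitly $H'=\langle\mathbf{a},\mathbf{b}^2,\mathbf{c}\rangle$, which has index $2$ in $G$ and on which $\alpha$ restricts to the trivial bundle by construction, so the induction of any line bundle will automatically have rank $2$.

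The main step is a Mackey double-coset computation. Since $H$ and $H'$ are distinct subgroups of index $2$ in $G$, we have $HH'=G$ and $H\cap H'=\langle\mathbf{a}^2,\mathbf{b}^2,\mathbf{c}\rangle$; as $(H',H)$ admits a single double coset,
\begin{equation*}
Res^G_{H'}\,Ind^G_H(\lambda)=Ind^{H'}_{H\cap H'}\bigl(\lambda|_{H\cap H'}\bigr).
\end{equation*}
I would then verify that $\lambda|_{H\cap H'}$ is invariant under the $\mathbf{a}$-conjugation on $H\cap H'$: the only non-trivial conjugation is $\mathbf{a}^{-1}\mathbf{c}\mathbf{a}=\mathbf{c}\mathbf{a}^2$, and $\lambda(\mathbf{a}^2)=1$. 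Because $[H':H\cap H']=2$, this invariance forces the induced module to split as $\lambda'_1\oplus\lambda'_2$, where $\lambda'_1$ and $\lambda'_2$ are the two extensions of $\lambda|_{H\cap H'}$ to $H'$ (differing by the character of $H'/(H\cap H')$).

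Take $\lambda':=\lambda'_1$. Frobenius reciprocity gives
\begin{equation*}
\bigl\langle Ind^G_{H'}(\lambda'),\,Ind^G_H(\lambda)\bigr\rangle_G =\bigl\langle \lambda'_1,\,\lambda'_1+\lambda'_2\bigr\rangle_{H'}=1.
\end{equation*}
Both representations have dimension $2$, and both are irreducible by Mackey's criterion: $\lambda^{\mathbf{a}}\neq\lambda$ on $\mathbf{b}$ since $\mathbf{a}^{-1}\mathbf{b}\mathbf{a}=\mathbf{b}^{-1}$, and $(\lambda')^{\mathbf{b}}\neq\lambda'$ on $\mathbf{a}$ since $\mathbf{b}^{-1}\mathbf{a}\mathbf{b}=\mathbf{b}^2\mathbf{a}$ and $\lambda'(\mathbf{b}^2)=-1$. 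Two irreducible representations with non-zero inner product are isomorphic, so $Ind^G_{H'}(\lambda')=Ind^G_H(\lambda)$, which is the claim.

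The principal obstacle --- and essentially the only genuine computation --- is the $\mathbf{a}$-invariance check in the Mackey step, because this is what forces the restricted bundle to split into two line bundles and produces the candidate $\lambda'$. Everything else is a formal consequence of the Frobenius reciprocity and Mackey irreducibility machinery.
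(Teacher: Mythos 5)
Your proposal is correct and lands on exactly the same data as the paper: $H'=\ker\alpha=\langle \mathbf{a},\mathbf{b}^2,\mathbf{c}\rangle$ and a character $\lambda'$ extending $\lambda|_{H\cap H'}$ (the paper takes $\lambda'(\mathbf{a})=1$, $\lambda'(\mathbf{b}^2)=-1$, $\lambda'(\mathbf{c})=1$; either extension works). The paper simply states this choice without verification, and your Mackey/Frobenius argument is the standard check it leaves implicit, so the approaches are essentially identical.
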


\begin{proof}
One can choose the group $H'=\langle \mathbf{b}^2,\mathbf{c},\mathbf{a}\rangle $ and the line bundle $\lambda'$ represented by $\lambda'(\mathbf{a})=1$, $\lambda'(\mathbf{b}^2)=-1$, $\lambda'(\mathbf{c})=1$.
\end{proof}

We will need also the following three easy facts.

\begin{lemma}
\label{lem:NU-36}
Let $G=G_{36}$. There exists $K'\subset G$ and a line bundle $\nu' \to BK'$ such that $Ind_H^{G}(\nu)=Ind_{K'}^{G}(\nu').$ The bundle $\beta$ restricts to trivial bundle over $BK'$.
\end{lemma}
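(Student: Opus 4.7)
To mirror Lemma \ref{lem:LAMBDA-36}, the plan is to exhibit an index-two subgroup $K'\subset G_{36}$ contained in $\ker\beta$ together with a one-dimensional character $\nu':K'\to\mathbb{C}^*$, and then verify the equality $Ind_H^G(\nu)=Ind_{K'}^G(\nu')$ by comparing characters on $G_{36}$. Since two finite-dimensional complex representations (equivalently, complex bundles over $BG$ via the Atiyah--Segal comparison) are isomorphic iff their characters agree, this will suffice.

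Since $\beta(\mathbf{a})=\beta(\mathbf{b})=1$ and $\beta(\mathbf{c})=-1$, the kernel $\ker\beta$ has index two and equals $\langle \mathbf{a},\mathbf{b}\rangle$. Moreover, because $[G:K']\cdot\dim\nu'$ must equal $\dim Ind_H^G\nu=2$, the choice $K'=\langle \mathbf{a},\mathbf{b}\rangle\cong C_4\rtimes C_4$ is forced, and the triviality of $\beta|_{BK'}$ is then automatic. For the character, I would propose $\nu'(\mathbf{a})=i$, $\nu'(\mathbf{b})=1$; the choice is in fact motivated by relation iv) above, $\beta\rho_!\nu=\rho_!\nu$, which via Clifford theory says that the irreducible $\rho_!\nu$ must come from a character of $\ker\beta$. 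Well-definedness of $\nu'$ on $K'$ reduces to the relation $\mathbf{a}^{-1}\mathbf{b}\mathbf{a}=\mathbf{b}^{-1}$, which on any one-dimensional representation forces $\nu'(\mathbf{b})^2=1$, satisfied by our choice.

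The main step is then the character comparison. Using $\mathbf{a}$ and $\mathbf{c}$ as coset representatives for $G/H$ and $G/K'$ respectively, the index-two induction formula reads
\[ \chi_{Ind_H^G\nu}(g)=\nu(g)+\nu(\mathbf{a}g\mathbf{a}^{-1})\quad(g\in H),\qquad \chi_{Ind_{K'}^G\nu'}(g)=\nu'(g)+\nu'(\mathbf{c}g\mathbf{c}^{-1})\quad(g\in K'), \]
both vanishing off the respective subgroup. I would split $G_{36}$ into the three regions $H\cap K'=\langle\mathbf{a}^2,\mathbf{b}\rangle$, $H\setminus K'$, and $K'\setminus H$ and check case by case, using the commutations $\mathbf{a}\mathbf{b}\mathbf{a}^{-1}=\mathbf{b}^{-1}$ and $\mathbf{a}\mathbf{c}\mathbf{a}^{-1}=\mathbf{c}\mathbf{a}^{2}$. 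The key cancellations are $\nu(\mathbf{c}\mathbf{b}^j)+\nu(\mathbf{c}\mathbf{a}^2\mathbf{b}^{-j})=1+(-1)=0$ on $H\setminus K'$ and $\nu'(\mathbf{a}^{2k+1}\mathbf{b}^j)+\nu'(\mathbf{a}^{-(2k+1)}\mathbf{b}^j)=i^{2k+1}+i^{-(2k+1)}=0$ on $K'\setminus H$, while on $H\cap K'$ matching reduces to $\nu'(\mathbf{a})^2=-1=\nu(\mathbf{a}^2)$ and $\nu'(\mathbf{b})=1=\nu(\mathbf{b})$.

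The only real difficulty is bookkeeping: once the correct $K'$ and $\nu'$ have been identified, each case is a one-line computation, and the three-region partition is precisely what makes the verification terminate quickly.
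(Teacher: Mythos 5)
Your choice $K'=\langle\mathbf{a},\mathbf{b}\rangle=\ker\beta$ with $\nu'(\mathbf{a})=i$, $\nu'(\mathbf{b})=1$ is exactly the one the paper makes (its proof consists of nothing more than stating this choice), and your character comparison over the three regions $H\cap K'$, $H\setminus K'$, $K'\setminus H$ is the correct, if unstated, verification. The proposal is correct and takes essentially the same approach as the paper.
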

%
\begin{proof} Put $K'=\langle b,a \rangle$ and $\nu'(b)=1$, $\nu'(a)=i$.
\end{proof}
%

\medskip

\begin{lemma}
\label{lem:LAMBDA-37}
Let $G=G_{37}$. There exists $H^{''}\subset G_{37}$ and a line bundle $\lambda^{''} \to BH^{''}$ such that $Ind_H^{G}(\lambda)=Ind_{H^{''}}^{G}(\lambda^{''}).$ The bundle $\alpha\gamma$ restricts to trivial bundle over $BH^{''}$.
\end{lemma}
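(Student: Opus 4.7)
The proof follows the template of Lemma \ref{lem:LAMBDA-36}. The one-dimensional character $\alpha\gamma$ of $G_{37}$ takes the values $(\alpha\gamma)(\mathbf{a})=-1$, $(\alpha\gamma)(\mathbf{b})=-1$, $(\alpha\gamma)(\mathbf{c})=1$, so its kernel is a normal subgroup of index two. The elements $\mathbf{ab}$, $\mathbf{c}$, and $\mathbf{d}=[\mathbf{a},\mathbf{c}]$ all lie in this kernel, and using $\mathbf{ba}=\mathbf{a}^{-1}\mathbf{b}$ one computes $(\mathbf{ab})^2=\mathbf{a}(\mathbf{ba})\mathbf{b}=\mathbf{b}^2=\mathbf{a}^2$, so $\langle \mathbf{ab},\mathbf{c},\mathbf{d}\rangle$ has order $16$. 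I would therefore take
\[
H''=\langle \mathbf{ab},\,\mathbf{c},\,\mathbf{d}\rangle,
\]
and the triviality of $\alpha\gamma$ on $BH''$ is immediate from the construction.

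Next, I would define $\lambda''$ on $BH''$ by the character values
\[
\lambda''(\mathbf{ab})=i,\qquad \lambda''(\mathbf{c})=1,\qquad \lambda''(\mathbf{d})=1.
\]
Using $\mathbf{c}\mathbf{a}\mathbf{c}^{-1}=\mathbf{a}\mathbf{d}$ (from $[\mathbf{a},\mathbf{c}]=\mathbf{d}$) together with $[\mathbf{b},\mathbf{c}]=1$, one verifies that $[H'',H'']=\langle \mathbf{d}\rangle$, so the abelianization is $C_4\times C_2$ generated by the images of $\mathbf{ab}$ and $\mathbf{c}$, and the formula above descends to a well-defined one-dimensional character.

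The final step is the identification $Ind_H^G(\lambda)=Ind_{H''}^G(\lambda'')$, which I would establish by comparing characters. Since both $H$ and $H''$ are normal of index two, the induced characters vanish outside the respective subgroups, so the claim reduces to three finite checks: that $Ind_H^G(\lambda)$ vanishes on $H\setminus H''$ (which follows from $\lambda(\mathbf{a}^{-1}\mathbf{b}\mathbf{a})=\lambda(\mathbf{b}^{-1})=-\lambda(\mathbf{b})$, so the two Frobenius terms cancel), that $Ind_{H''}^G(\lambda'')$ vanishes on $H''\setminus H$ by the symmetric calculation using conjugation by a representative of $G/H''$, and that the two characters agree on $H\cap H''=\langle \mathbf{b}^2,\mathbf{c},\mathbf{d}\rangle$.

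The main obstacle will be the third of these checks, which is where the non-abelian structure of $H''$ makes the bookkeeping less transparent than in Lemma \ref{lem:LAMBDA-36}: one must track carefully how the outer element of $G/H''$ acts on $\lambda''$ using the identity $\mathbf{c}\mathbf{a}\mathbf{c}^{-1}=\mathbf{a}\mathbf{d}$, and confirm that $\lambda''+(\lambda'')^{\mathbf{c}}$ agrees with $\lambda+\lambda^{\mathbf{a}}$ on $\langle \mathbf{b}^2,\mathbf{c},\mathbf{d}\rangle$. Once this is in hand, equality of the induced representations follows from equality of their characters.
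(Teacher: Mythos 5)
Your proposal is correct and matches the paper's proof: your $H''=\langle \mathbf{ab},\mathbf{c},\mathbf{d}\rangle$ is exactly the paper's $\langle \mathbf{b}^2,\mathbf{c},\mathbf{ab}\rangle$ (since $\mathbf{b}^2=(\mathbf{ab})^2$ and $\mathbf{d}=[\mathbf{ab},\mathbf{c}]$), and your character $\lambda''$ is the same one, with the character-theoretic verification you sketch being precisely the ``completely standard'' check the paper omits. One small slip: the nontrivial coset of $H''=\ker(\alpha\gamma)$ is represented by $\mathbf{a}$ or $\mathbf{b}$, not $\mathbf{c}$ (which lies in $H''$), so the final comparison should involve $\lambda''+(\lambda'')^{\mathbf{b}}$; this is harmless because $\langle\mathbf{b}^2,\mathbf{c},\mathbf{d}\rangle$ is fixed pointwise by conjugation by $\mathbf{b}$, making that check immediate.
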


\begin{proof} Let $H^{''}=\langle \mathbf{b}^2,\mathbf{c},\mathbf{ab}\rangle $ and $\lambda''$ be represented by $\lambda^{''}(\mathbf{ab})=i$, $\lambda^{''}(\mathbf{b^2})=-1$,  $\lambda^{''}(\mathbf{c})=1$.
\end{proof}

\medskip

\begin{lemma}
\label{lem:NU-37}
Let $G=G_{37}$. There exists $K^{''}\subset G$ and a line bundle $\nu^{''} \to BK^{''}$ such that $Ind_H^{G}(\nu)=Ind_{K^{''}}^{G}(\nu^{''}).$ The bundle $\beta$ restricts to trivial bundle over $BK^{''}$.
\end{lemma}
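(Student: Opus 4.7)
The plan is to take $K'' := \ker(\beta)$, which from the definition of $\beta$ on $G_{37}$ is the subgroup $K'' = \langle \mathbf{a}, \mathbf{b}, \mathbf{d}\rangle$; indeed $\beta(\mathbf{a}) = \beta(\mathbf{b}) = 1$ and $\beta(\mathbf{d}) = \beta([\mathbf{a},\mathbf{c}]) = 1$, while $\beta(\mathbf{c}) = -1$, so $\beta$ is nontrivial and $K''$ has index two in $G$. Structurally $K'' \cong Q_8 \times C_2$, with $\langle \mathbf{a}, \mathbf{b}\rangle \cong Q_8$ (via $\mathbf{a}^4 = 1$, $\mathbf{b}^2 = \mathbf{a}^2$, $\mathbf{b}\mathbf{a}\mathbf{b}^{-1} = \mathbf{a}^{-1}$) and $\mathbf{d}$ central of order two; the triviality of $\beta|_{K''}$ is then built in by construction.

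For the line bundle I would take $\nu''$ to be the character of $K''$ defined by $\nu''(\mathbf{a}) = \nu''(\mathbf{b}) = 1$ and $\nu''(\mathbf{d}) = -1$. This is well-defined because the abelianization of $K''$ is isomorphic to $(\mathbb{Z}/2)^3$ generated by the images of $\mathbf{a}, \mathbf{b}, \mathbf{d}$, and the relation $\mathbf{a}^2 = \mathbf{b}^2$ is respected since $\nu''(\mathbf{a})^2 = 1 = \nu''(\mathbf{b})^2$. To verify $Ind_H^G(\nu) \cong Ind_{K''}^G(\nu'')$ I compare characters: since both $H$ and $K''$ are normal of index two in $G$, each induced character is of the form $g \mapsto \chi(g) + \chi(tgt^{-1})$ on the subgroup and vanishes on its complement, where $t$ is a chosen representative of the nontrivial coset (namely $\mathbf{a}$ for $H$ and $\mathbf{c}$ for $K''$).

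The verification splits into three cases, organized by the overlap $H \cap K'' = \langle \mathbf{b}, \mathbf{d}\rangle$. On $H \cap K''$, conjugation by $\mathbf{a}$ sends $\mathbf{b}^i\mathbf{d}^k$ to $\mathbf{b}^{-i}\mathbf{d}^k$ and conjugation by $\mathbf{c}$ fixes $\mathbf{b}^i\mathbf{d}^k$ pointwise; both $\nu$ and $\nu''$ take value $(-1)^k$ at either representative, so both characters equal $2(-1)^k$ there. On $H \setminus K''$ elements have the form $\mathbf{b}^i\mathbf{c}\mathbf{d}^k$, and conjugation by $\mathbf{a}$ sends these to $\mathbf{b}^{-i}\mathbf{c}\mathbf{d}^{k+1}$; the two $\nu$-values are $(-1)^k$ and $(-1)^{k+1}$ and cancel. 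Dually, on $K'' \setminus H$ elements are $\mathbf{a}^j\mathbf{b}^i\mathbf{d}^k$ with $j$ odd, and conjugation by $\mathbf{c}$ sends these to $\mathbf{a}^j\mathbf{b}^i\mathbf{d}^{j+k}$; for odd $j$ the resulting $\nu''$-values sum to zero.

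The key ingredient powering both vanishing computations is the conjugation identity $\mathbf{a}\mathbf{c}\mathbf{a}^{-1} = \mathbf{c}\mathbf{d}$ (equivalently $\mathbf{c}\mathbf{a}\mathbf{c}^{-1} = \mathbf{a}\mathbf{d}$), obtained by rearranging the defining relation $\mathbf{d} = [\mathbf{a},\mathbf{c}]$ and using the centrality of $\mathbf{d}$ (which is forced by the abelianity of $H = \langle \mathbf{b}, \mathbf{c}, \mathbf{d}\rangle$). This is really the only technical point; once it is recorded, the character matching is mechanical and the lemma follows in strict parallel to Lemma \ref{lem:NU-36}, with the central generator $\mathbf{d}$ compensating for the fact that $\langle \mathbf{a}, \mathbf{b}\rangle$ has order only eight in $G_{37}$ rather than sixteen as it did in the $G_{36}$ case.
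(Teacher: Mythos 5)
Your choice of $K''=\langle\mathbf{a},\mathbf{b},\mathbf{d}\rangle=\ker\beta$ and of the character $\nu''(\mathbf{a})=\nu''(\mathbf{b})=1$, $\nu''(\mathbf{d})=-1$ is exactly the paper's, and your character-theoretic verification (using $\mathbf{c}\mathbf{a}\mathbf{c}^{-1}=\mathbf{a}\mathbf{d}$ and the centrality of $\mathbf{d}$) is correct. The paper simply states the subgroup and character without the check, so your proof is the same argument with the routine verification written out.
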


\begin{proof}
Let $K^{''}=\langle \mathbf{b},\mathbf{d},\mathbf{a}\rangle $ and $\nu^{''}(\mathbf{a})=\nu^{''}(\mathbf{b})=1$,  $\nu^{''}(\mathbf{d})=-1$.
\end{proof}

\bigskip

\bigskip

\subsection{Relations of Theorem \ref{thm:G}}

Clearly the relations
$$
a^{2^s}=b^{2^s}=c^{2^s}=0.
$$
are immediate consequences of the bundle relations $\alpha^2=\beta^2=\gamma^2=1$ for all cases.

The 4th and 5th relations follow from \eqref{Tru} and \eqref{Trv} respectively.

For the 6th relation
\begin{equation*}
a(a+y_1+\sum_{i=1}^{s-1}a^{2^s-2^i}y_2^{2^{i-1}})=0
\end{equation*}
consider the transfer $Tr^{'*}$ of the double covering $\rho:BH' \to BG$ in case $G_{36}$ and apply formula \eqref{eq:tr} and Lemma \ref{lem:LAMBDA-36}.  Then it follows that the second factor of the relation is the transfer of $c_1(\lambda')$, i.e.,
$$Tr^{'*}(c_1(\lambda'))=(a+y_1+\sum_{i=1}^{s-1}a^{2^s-2^i}y_2^{2^{i-1}}).$$
Therefore
$$aTr^{'*}(c_1(\lambda'))=Tr^{'*}(\rho^*(a)c_1(\lambda'))=Tr^{'*}(0\cdot c_1(\lambda'))=0.$$
Similarly, for $G_{37}$ apply formula \eqref{eq:tr} and Lemma \ref{lem:LAMBDA-37}.

\medskip

In the same spirit, apply formula \eqref{eq:tr}, Lemma \ref{lem:NU-36} and Lemma \ref{lem:NU-37}
for the 7th relation
\begin{equation*}
b(b+x_1+\sum_{i=1}^{s-1}b^{2^s-2^i}x_2^{2^{i-1}})=0.
\end{equation*}

\medskip

Now note that the 4th and 6th relations imply $a^2c=ac^2$, the first relations of Theorem \ref{thm:G} ii). For this multiply the 4th relation by $a$ and the 6th relation by $c$. The sum of these terms equals $a^2c+ac^2$ up to an invertible factor.  Similarly the 5th and 7th relation imply $b^2c=bc^2$, the second relation of Theorem \ref{thm:G} ii). Note also

\begin{equation}
\label{a^ic^j}
a^ic^j=0, \,\,\, b^ic^j=0,\,\, i+j>2^s.
\end{equation}

The first one is the consequence of $a^2c=ac^2$ and $a^{2^s}=c^{2^s}=0$. Similarly the second one follows from $b^2c=bc^2$ and $b^{2^s}=c^{2^s}=0$.

\medskip

For the decompositions of $x_2^{2^s}$, $y_2^{2^s}$, (also for the formulas for $x_1^{2^s}$ and $y_1^{2^s}$ of Theorem \ref{thm:G} ii) we need the material of Section 3. Namely we have to apply Lemma \ref{lem:zeta^2} to all induced representations given in Section 3 and take into account that their determinants can written in terms of the bundles $\alpha, \beta, \gamma$.
For example for $G=G_{36}$

\begin{equation*}
x_2^{2^{s}}=c^2+bc, \,\,\, x_1^{2^s}=b^{2^{s-1}}c^{2^{s-1}}
\end{equation*}
and
\begin{equation*}
y_2^{2^s}=a^2+ac, \,\,\, y_1^{2^s}=a^{2^{s-1}}c^{2^{s-1}}
\end{equation*}
are the consequences of product relations v) and vi).
Let us prove first two relations. Equate Chern classes in the bundle relation of v). Then taking into account \eqref{a^ic^j} we get
from the equation for the first Chern classes
$$
x_1^{2^s}=b+c+b+c+b^{2^{s-1}}c^{2^{s-1}}=a^{2^{s-1}}c^{2^{s-1}}.
$$

For the decomposition of $x_2^{2^s}$ apply the equation for the second Chern classes and take into account \eqref{a^ic^j}:

\begin{align*}
x_2^{2^s}&=c_2(\rho_!\nu^2)+c_1(det\,\rho_!\nu)^2 && \\
&=c_2(1+\beta+\gamma+\beta \gamma)+c_1(\beta)^2=bc+bF(b,c)+cF(b,c)+b^2 && \\
&=bc+c^2. && \\
\end{align*}

\medskip
%
%
%
%
%
%
%
%
%
%
%

\medskip

The 8th and 9th relations.

\begin{equation*}
\label{eq:Tb3}
(c+y_1+\sum_{i=1}^{s-1}c^{2^s-2^i}y_2^{2^{i-1}})(b+x_1+\sum_{i=1}^{s-1}b^{2^s-2^i}x_2^{2^{i-1}})=Tb^{2^s-1}.
\end{equation*}

\begin{proof}
Let $G=G_{36}$. Consider the diagram

\medskip

\begin{equation}
\begin{CD}
\label{eq:DG36b}
B \langle \mathbf{b},\mathbf{a}^2 \rangle @>>> B \langle \mathbf{b},\mathbf{a} \rangle \\
@VV{\rho_{\mu}}V              @VV{\rho_{\beta}}V \\
B \langle \mathbf{b},\mathbf{c},\mathbf{a}^2 \rangle @>>\rho_{\gamma}> BG
\end{CD}
\end{equation}

\medskip

Then the left hand side of our relation is equal to
\begin{align*}
&Tr^*_{\gamma}(v)(b+x_1+\sum_{i=1}^{s-1}b^{2^s-2^i}x_2^{2^{i-1}})               && \text{by transfer formula \eqref{eq:tr}} \\
&=Tr^*_{\gamma}(v) Tr^*_{\beta}(c_1(\nu'))                                          && \text{by Lemma \ref{lem:NU-36} } \\
&=Tr^*_{\gamma}(v \cdot \rho^*_{\gamma}Tr^*_{\beta}(c_1(\nu')))                      && \text{by Frobenius reciprocity of the transfer} \\
&=Tr^*_{\gamma}(v \cdot Tr^*_{\mu}(\rho^*_{\mu}(u)))                         && \text{by the double coset formula and Lemma \ref{lem:NU-36}} \\
&=Tr^*_{\gamma}(Tr^*_{\mu}(\rho^*_{\mu}(uv)))                                                 && \text{by Frobenius reciprocity} \\
&=Tr^*_{\gamma}(uv \cdot Tr^*_{\mu}(1))=Tr^*_{\gamma}(uv \cdot c_1^{2^{s}-1}(\mu))         && \text{by the formula for $Tr^*(1)$} \\
&=Tb^{2^{s}-1}.                                                                       && \text{by the definitions of $\beta$, $\mu$, and $T$} \\
\end{align*}
\end{proof}

Similarly

\begin{equation*}
\label{eq:Ta3}
(c+x_1+\sum_{i=1}^{s-1}c^{2^s-2^i}x_2^{2^{i-1}})(a+y_1+\sum_{i=1}^{s-1}a^{2^s-2^i}y_2^{2^{i-1}})=Ta^{2^s-1}.
\end{equation*}

\begin{proof} Consider the diagram

\medskip

\begin{equation}
\label{eq:DG36a}
\begin{CD}
B\langle \mathbf{b}^2,\mathbf{c},\mathbf{a}^2\rangle           @>>>         B\langle \mathbf{a},\mathbf{b}^2,\mathbf{c}\rangle\\
@VV{\rho_{\lambda^2}}V                                                             @VV{\rho_{\alpha}}V \\
B\langle \mathbf{b},\mathbf{c},\mathbf{a}^2 \rangle             @>>\rho_{\gamma}>         BG
\end{CD}
\end{equation}

\medskip

With this notation the left hand side of the above relation is equal to
\begin{align*}
&Tr^*_{\gamma}(u)  Tr^*_{\alpha}(c_1(\lambda'))                                && \text{by Lemma \ref{lem:LAMBDA-36} and formula \eqref{eq:tr}}\\
&=Tr^*_{\gamma}(u \cdot \rho^*_{\gamma}Tr^*_{\alpha}(c_1(\lambda')))                    && \text{by Frobenius reciprocity of the transfer}\\
&=Tr^*_{\gamma}(u \cdot Tr^*_{\lambda^2}(\rho^*_{\lambda^2}(v)))              && \text{by the double coset formula and Lemma \ref{lem:LAMBDA-36}}\\
&=Tr^*_{\gamma}(Tr^*_{\lambda^2}(\rho^*_{\lambda^2}(uv)))                                  && \text{by Frobenius reciprocity}\\
&= Tr^*_{\gamma}(uv \cdot Tr^*_{\lambda^2}(1))=Tr^*_{\gamma}(uv \cdot c_1^{2^{s}-1}(\lambda^2))  && \text{by the formula for $Tr^*(1)$}\\
&=Ta^{2^{s}-1}                                                                          && \text{by the definitions of $\alpha$, $\lambda$, and $T$.}\\
\end{align*}
\end{proof}

For the case of $G=G_{37}$ the proof is analogous. We just have to apply Lemma \ref{lem:NU-37} and Lemma \ref{lem:LAMBDA-37}.

\medskip

The same applies to 11th and 12th relations and may be proved for both groups simultaneously.
In each case we will arrive at

$$T(a+y_1+\sum_{i=1}^{s-1}a^{2^s-2^i}y_2^{2^{i-1}})=a^{2^s-1}Tr^*(uv^2)$$
or
$$T(b+x_1+\sum_{i=1}^{s-1}b^{2^s-2^i}x_2^{2^{i-1}})=b^{2^s-1}Tr^*(u^2v).$$
Therefore we will need that for the involution $t \in C_2=G/H$ one has by Frobenius reciprocity

\medskip

\noindent i) $ Tr^*(u^2v)=Tr^*(uv(u+tu)-vutu))=Tr^*(uv)x_1-Tr^*(v)x_2$,

\noindent ii) $ Tr^*(uv^2)=Tr^*(uv(v+tv)-uvtv))=Tr^*(uv)y_1-Tr^*(u)y_2.$

\bigskip

Here is details for $G=G_{37}$. Apply the diagram


\begin{equation}
\begin{CD}
\label{eq:DG37b}
B \langle \mathbf{b},\mathbf{d} \rangle @>>> B \langle \mathbf{b},\mathbf{d},\mathbf{a} \rangle \\
@VV{\rho_{\mu}}V              @VV{\rho_{\beta}}V \\
B \langle \mathbf{b},\mathbf{c},\mathbf{d}\rangle @>>\rho_{\gamma}> BG
\end{CD}
\end{equation}

Then the above equality i) gives
$$
T(b+x_1+\sum_{i=1}^{s-1}b^{2^s-2^i}x_2^{2^{i-1}})+b^{2^s-1}Tx_1+b^{2^s-1}x_2(c+y_1+\sum_{i=1}^{s-1}c^{2^s-2^i}y_2^{2^{i-1}})=0.
$$
Then the second summand is zero by the 6th relation and \eqref{a^ic^j}. The third summand is equal to $b^{2^s-1}x_2(c+y_1)$ by \eqref{a^ic^j}. This gives the 11th relation.

\medskip

Similarly, applying the diagram

\begin{equation}
\label{eq:DG37a}
\begin{CD}
B\langle \mathbf{b},\mathbf{c}\rangle           @>>>         B\langle \mathbf{ab},\mathbf{a}^2,\mathbf{c}\rangle\\
@VV{\rho_{\lambda^2}}V                                                             @VV{\rho_{\alpha\gamma}}V \\
B\langle \mathbf{b},\mathbf{c},\mathbf{d} \rangle             @>>\rho_{\gamma}>         BG
\end{CD}
\end{equation}

%
%
%
and the above equality ii) we have

$$
T(a+y_1+\sum_{i=1}^{s-1}a^{2^s-2^i}y_2^{2^{i-1}})+a^{2^s-1}Ty_1+a^{2^s-1}y_2(c+x_1+\sum_{i=1}^{s-1}c^{2^s-2^i}x_2^{2^{i-1}})=0.
$$

Again the second summand is zero by the 7th relation and \eqref{a^ic^j} and the third summand is equal to $a^{2^s-1}y_2(c+x_1)$. The 12th relation follows.

\qed

\medskip

The relation $cT=0$ is easy:

$$cT\equiv cTr^*_{\gamma}(uv)=Tr^*_{\gamma}(uv \gamma^*(c))=Tr^*_{\gamma}(uv \cdot 0)=0.$$

\medskip

Now let us prove the 10th relation.

Let $u'=tu$ and $v'=tv$, where $t$ is the involution as in \eqref{t} and $Tr^*=Tr^*_{\gamma}$. 

Then

\begin{align*}
&Tr^*(uv)+Tr^*(uv')=Tr^*(u(v+v'))=Tr^*(u)Tr^*(v),&&
\end{align*}

\begin{align*}
Tr^*(uv)Tr^*(uv')&=Tr^*(uv(uv'+u'v))  &&\\
&=Tr^*(u^2 vv')+Tr^*(v^2 uu')=Tr^*(u^2)y_2+Tr^*(v^2)x_2.&&
\end{align*}

Also

\medskip

\begin{align*}
Tr^*(u^2)=&Tr^*(u(u+u')-uu')=Tr^*(u)x_1-Tr^*(1)x_2,\\
Tr^*(v^2)=&Tr^*(v(v+v')-vv')=Tr^*(v)y_1-Tr^*(1)y_2.\\
\end{align*}

\medskip

Now we apply these formulas and take into account $Tr^*(1)x_1=Tr^*(1)y_1=0$. This gives the quadratic equation in $T=Tr^*_{\gamma}(uv)$

\begin{align*}
T^2&=T(c+x_1+\sum_{i=1}^{s-1}c^{2^s-2^i}x_2^{2^{i-1}})(c+y_1+\sum_{i=1}^{s-1}c^{2^s-2^i}y_2^{2^{i-1}})&&\\
&+x_2y_1(c+y_1+\sum_{i=1}^{s-1}c^{2^s-2^i}y_2^{2^{i-1}})+x_1y_2(c+x_1+\sum_{i=1}^{s-1}c^{2^s-2^i}x_2^{2^{i-1}}).&&
\end{align*}
\medskip

Now to get the 10th relation apply $cT=0$.
\qed

\medskip

\medskip

The decompositions for $x_1$ and $y_1$ are the consequences of the formula \eqref{eq:FGL} applied to the determinants of $\rho_!\nu$ and $\rho_!\lambda$.

We need $(x_1x_2)^{2^{2s-2}}=0$ and $(y_1y_2)^{2^{2s-2}}=0$. It follows from the relations ii) of Theorem \ref{thm:G} that moreover we have $(x_1x_2)^{2^s}=(y_1y_2)^{2^s}=0$: decompositions $x_1^{2^s}=(bc)^{2^{s-1}}$ and $y_1^{2^s}=(ac)^{2^{s-1}}$ imply
$$
x_1^{2^s}b=x_1^{2^s}c=x_1^{2^s}a^2=y_1^{2^s}a=y_1^{2^s}c=y_1^{2^s}b^2=0,
$$
as $a^2c=ac^2$, $b^2c=bc^2$ and $a^{2^s}=b^{2^s}=c^{2^s}=0.$

\medskip

That is, all the terms of the above decomposition of $x_2^{2^s}$ annihilate $x_1^{2^s}$. Similarly for $y_1$ and $y_2$.
It is also clear that for computing the Euler classes of the determinants (see Section 3) in each case we need only the initial fragment of the formal group law $F(x,y)=x+y+(xy)^{2^{s-1}}.$

For $G_{36}$ we have to apply the formula \eqref{eq:FGL} and take into account that $det(\rho_!\lambda)=\gamma$ and $det(\rho_!\nu)=\beta$. 

For $G_{37}$ we have $det(\rho_!\lambda)=1$ and $det(\rho_!\nu)=\beta\gamma$,  hence $e(det(\rho_!\lambda))=1$ and  $e(det(\rho_!\lambda))=F(b,F(b,c))=b+c+(bc)^{2^{s-1}}$.

\qed

\medskip

\subsection{Generators}

Here we prove the following

\begin{lemma}
\label{generatorsForG34-37}
Let $G$ be one of the groups $G_{34},\dots, G_{37}$. Then $K(s)^*(BG)$ is generated by $c,a,b,x_2,y_2,T$ as a $K(s)^*(pt)$ algebra.
\end{lemma}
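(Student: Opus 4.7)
The plan is to invoke Theorem~\ref{1} as the principal tool. For each $G \in \{G_{34},\ldots,G_{37}\}$, the first step is to exhibit a product decomposition $H = H_1 \times H_2$ of the maximal abelian subgroup, together with subgroups $G_i \le G$ fitting into extensions $1 \to H_i \to G_i \to C_2 \to 1$, chosen so that the conjugation action of $C_2 = G/H$ on $H$ is diagonal in the sense required by Theorem~\ref{1}. For $G_{34}, G_{35}$ the decomposition $H_1 = \langle \mathbf{a}\rangle$, $H_2 = \langle \mathbf{b}\rangle$ is manifestly diagonal from the presentations. For $G_{36}, G_{37}$ the three cyclic factors of $H$ have to be regrouped into two blocks on which the $C_2$-action is diagonal; this is possible after a convenient choice of generators dictated by the explicit $G$-action on $H$. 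Each $H_i$ is abelian and therefore good, and each $G_i$ is a $2$-group of order at most $16$ whose Morava $K$-theory is already known to be HKR-good and algebraically generated by Chern classes of its one- and two-dimensional complex representations.

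By Theorem~\ref{1}(ii), $K(s)^*(BG)$ is then spanned as a $K(s)^*(pt)$-module by $Im\,Tr^*$ and $Im\,\rho^*$, and it suffices to show that both subspaces lie in the subalgebra $A \subseteq K(s)^*(BG)$ generated by $a, b, c, x_2, y_2, T$. For $Im\,\rho^*$, the classes $a, c, y_2$ are pullbacks along $\rho_1$ of the natural Chern-class generators of $K(s)^*(BG_1)$, and $b, c, x_2$ are pullbacks along $\rho_2$ of those of $K(s)^*(BG_2)$; combined with the known algebra structure of each $K(s)^*(BG_i)$, this gives $Im\,\rho^* \subseteq A$.

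For $Im\,Tr^*$, the Kunneth isomorphism presents $K(s)^*(BH)$ on a monomial $K(s)^*(pt)$-basis in the Chern classes of the cyclic factors of $H$, so every transferred class is a $K(s)^*(pt)$-linear combination of transfers of such monomials. Frobenius reciprocity $\zeta \cdot Tr^*(\xi) = Tr^*(\rho^*\zeta\cdot \xi)$, applied with $\zeta \in \{a,b\}$ whose restrictions to $BH$ are themselves monomials in the Chern classes (by the restriction relations i) of Section~5), reduces the degree of the monomial modulo multiplication by elements of $A$. After finitely many such reductions only a short list of low-degree transfers remains; these are identified directly with polynomials in the stated generators via \eqref{Tru}, \eqref{Trv}, the definition $T = Tr^*(uv)$, the relation $cT = 0$ (forced by $\rho^*c = 0$), and, for the three-factor groups $G_{36}, G_{37}$, the alternative double coverings of Lemmas~\ref{lem:LAMBDA-36}--\ref{lem:NU-37} combined with the double coset formula, exactly as in the proofs of the corresponding ring relations of Subsection~5.2.

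The main obstacle will be the verification of the diagonal-decomposition hypothesis of Theorem~\ref{1} in the less transparent cases (the nonsplit $G_{35}$ and the three-factor groups $G_{36}, G_{37}$); once a suitable decomposition is fixed, the remainder is a routine Frobenius-reciprocity bookkeeping.
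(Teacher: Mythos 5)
Your overall strategy is the same as the paper's: both proofs hinge on Theorem \ref{1}, applied to the diagonal decompositions $H=\langle\mathbf a\rangle\times\langle\mathbf b\rangle$ for $G_{34},G_{35}$ and $H=\langle\mathbf b\rangle\times(C_2\times C_2)$ for $G_{36},G_{37}$ (so that $K(s)^*(BH)$ becomes $M\otimes M$, resp.\ $M\otimes N$, as a $C_2$-module, with $M=K(s)^*(BC_4)$), and both reduce the lemma to showing that $\mathrm{Im}\,Tr^*$ and the classes restricting onto the trivial $C_2$-summand lie in the subalgebra $A$ generated by $a,b,c,x_2,y_2,T$. Your handling of $\mathrm{Im}\,\rho^*$ via the Chern-class generation of $K(s)^*(BG_i)$ for the order-$8$ quotients ($D_8$, $Q_8$) is equivalent in content to what the paper does, which is to read off the explicit basis of the trivial summand $T'$ of $M$ from the dihedral computation of \cite{BV}; either way one must also observe that $x_1,y_1\in A$ via the recursive decompositions of $x_1,y_1$ in Theorem \ref{thm:G} (the recursion terminates because $x_1x_2$ and $y_1y_2$ are nilpotent) -- a point you should make explicit, since \eqref{Tru} and \eqref{Trv} are stated in terms of $x_1,y_1$.

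The one step that does not work as you describe it is the reduction of $\mathrm{Im}\,Tr^*$. Frobenius reciprocity with $\zeta\in\{a,b\}$ cannot by itself leave only ``a short list of low-degree transfers'': by the restriction relations, $\rho^*a$ and $\rho^*b$ are either $2^s$-th powers of first Chern classes (since $\rho^*\alpha=\lambda^2$ and $[2](x)=x^{2^s}$ here) or, in the three-factor cases, the Euler class of the extra $C_2$-factor, so pulling them out only lowers exponents below $2^s$ and leaves all $4^s$ classes $Tr^*(u^iv^j)$ with $i,j<2^s$ unaccounted for. The actual engine of the paper's argument is the pair of quadratic relations $u^2=u\bar x_1-\bar x_2$ and $v^2=v\bar y_1-\bar y_2$, which hold because $u,tu$ and $v,tv$ are the Chern roots of the restrictions of the rank-two induced bundles whose Chern classes are $x_i$ and $y_i$. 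These put every polynomial in $u,v$ into the canonical form $g_0+g_1u+g_2v+g_3uv$ with $g_i$ restricted from polynomials in $x_1,x_2,y_1,y_2$ (and $b$, $c$ in the three-factor cases), whence by Frobenius reciprocity every element of $\mathrm{Im}\,Tr^*$ is an $A$-linear combination of $Tr^*(1)=c^{2^s-1}$, $Tr^*(u)$, $Tr^*(v)$ and $T$, and one finishes with \eqref{Tru} and \eqref{Trv}. With that substitution in place of the degree-reduction step, your argument coincides with the paper's.
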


\begin{proof}

Recall $G_{34}$ and $G_{35}$ have maximal abelian subgroup $H=\langle\mathbf{a},\mathbf{b}\rangle \cong C_4\times C_4$ on which the quotient acts diagonally by inverting $\mathbf{a}$ and $\mathbf{b}$. The dihedral group $D_{8}$ could be written as a semidirect product $C_4\rtimes C_2$ with that action.
Thus as a $\mathcal{C}_2$ module $K(s)^*(BH)=M\otimes M$, where $M=K(s)^*(BC_4)$.

Consider the decomposition of  $K(s)^*(BH)$ into free and trivial $C_2$-modules
$$[K(s)^*(BH)]^{C_2}=(F)^{C_2}+T.$$

Denote the restrictions of the generators of Theorem \ref{thm:G} to $K(s)^*(BH)$ with the same symbols but with bars.

Let $A$ be the subalgebra in $K(s)^*(BG)$ generated by $c,a,b,x_2,y_2,T$. Clearly the composition $\rho^*Tr^*=\text{Norm}=1+\text{involution}$ is onto
$(F)^{C_2}$. But $Im Tr^*\subset A$: As $u^2=u\bar{x_1}-\bar{x_2}$ and $v^2=v\bar{y_1}-\bar{y_2}$ any polynomial in $u,v$ can be uniquely written as  $g_0+g_1u+g_2v+g_3uv$ where $g_i=g_i(\bar{x_1},\bar{y_1},\bar{x_2},\bar{y_2}).$ Therefore one can use Frobenius reciprocity of the transfer and write transfer of any element in $Tr^*(u)$,$Tr^*(v)$,$Tr^*(uv)$, and $c,x_1,x_2,y_1,y_2$. Then one can use the formulas \eqref{Tru}, \eqref{Trv} for $Tr^*(u)$, $Tr^*(v)$ respectively and the decompositions of $x_1$ and $y_1$ of Theorem \ref{thm:G} and see that 

\begin{equation}
\label{ImTr}
Im Tr^* \subset A.
\end{equation} 
Therefore $\rho^*(A)$ is onto $(F)^{C_2}$.

Then we have to check whether the invariants in $T$ are also covered by $\rho^*A$. Let $\chi_s(F^{C_2})$ and $\chi_s(T)$ be the $K(s)^*$-Euler characteristics. Then
$$\chi_s(BH)=2\chi_s(F)+\chi_s(T), \,\,\,\chi_s(BH)^{C_2}=\chi_s(F)+\chi_s(T).$$

Let $M^{C_2}=(F')^{C_2}+T'$.
%
%
Then one can easily read off the bases for $(F')^{C_2}$ and $T'$. In notations of \cite{BV}, p. 3712, the base sets for $T'$ and $F'$ are
as follows

\begin{align*}
\chi_s(F'^{C_2})&=2^{2s-1}-2^{s-1}, &&\{c_1c_2^j\, | \,0 \leq j < 2^{s-1},
c_2^k, k\neq j, k\neq j+2^{s-1}\},\\
\chi_s(T')&=2^{s}, &&\{c_2^i,xc_2^i\,\,|\, 0\leq i\leq 2^{s-1}-1\}.\\
\end{align*}

The latter fact with \eqref{ImTr} already completes the proof of Lemma \ref{generatorsForG34-37} if apply Theorem \ref{1}. Moreover, actually we also computed  $\chi_{s}(G)$: One has
$$\chi_s(T)=\chi_s(T')^2=4^{s} \text{ \,\,\,and\,\,\, } \chi_s(F)=2\chi_s(F')\chi_s(T')+2\chi_s(F')^2=
\frac{1}{2}{16^s}-\frac{1}{2}4^{s}.$$
Thus we get already known fact from \cite{SCH1}
\begin{equation}
\label{xi}
\chi_{s}(G)=\chi_s(F)+2^s\chi_s(T)=\frac{1}{2}16^{s}-\frac{1}{2}4^{s}+8^{s}.
\end{equation}

\medskip

$G_{36}$ contains the maximal abelian subgroup
$H=\langle \mathbf{b},\mathbf{a}^2,\mathbf{c}\rangle \cong C_4 \times C_2\times C_2$. From the definition one reads off that $K(s)^*(BH)=M\otimes N$, where $N=K(s)^*(BC_2\times BC_2)$, with the switch action. The situation is similar for $G_{37}$ and $H=\langle \mathbf{b}, \mathbf{c}, \mathbf{d}\rangle$.

\end{proof}

\subsection{End of the proof}

 After Lemma \ref{generatorsForG34-37} it suffices to verify that for all our groups the defining relations of Theorem \ref{thm:G} give us a ring of Euler characteristic \eqref{xi}, already computed in \cite{SCH1}.
For each of our groups, we have to choose a basis for $K(s)^*(BG)$ as the union of a basis for $K(s)^*(BG)/ker \rho^*$, and a basis for $ker \rho^*,$ where $\rho:BH\rightarrow BG$.

\begin{lemma}
\label{basis36-37}

A basis for $K(s)^*(BG)$, $G=G_{36},G_{37}$ is

 $\{x_1^iy_1^jx_2^ky_2^l | i,j<2^s, k,l<2^{s-1}\}$;

 $\{ax_2^{k+2^{s-1}}y_2^l | k,l<2^{s-1}\}$;

 $\{x_1^iax_2^ky_2^l$, $y_1^ix_2^{k+2^{s-1}}y_2^l | i<2^s, k,l<2^{s-1}\}$;

 $\{Tx_1^iy_1^jx_2^ky_2^l |i,j<2^s-1, k,l<2^{s-1}\}$;

 $\{c^ix_2^jy_2^k,\ c^iax_2^jy_2^k | 0<i<2^s, j<2^s, k<2^{s-1}\}$.
\end{lemma}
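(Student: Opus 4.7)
The plan is to verify the listed elements form a $K(s)^*(pt)$-basis for $K(s)^*(BG)$ by combining a spanning argument using the relations of Theorem~\ref{thm:G} with a cardinality count matching the Euler characteristic $\chi_s(G)$ from \eqref{xi}. Since $\dim_{K(s)^*} K(s)^*(BG) = \chi_s(G)$ is known from \cite{SCH1}, spanning plus the correct total count suffices.

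First I would verify the count: the five sets contribute $2^{4s-2}$, $2^{2s-2}$, $2^{3s-1}$, $(2^s-1)^2 \cdot 2^{2s-2}$, and $(2^s-1) \cdot 2^{2s}$ elements respectively, and a direct computation gives the total $\frac{1}{2}16^s - \frac{1}{2}4^s + 8^s = \chi_s(G)$. Sets~1--4 (with no $c$) contribute $\frac{1}{2}16^s + \frac{1}{2}4^s$, matching $\chi_s(F^{C_2}) + \chi_s(T) = \dim [K(s)^*(BH)]^{C_2}$, while set~5 (positive $c$-power, hence in $\ker \rho^*$ since $\rho^*(c) = 0$) contributes $(2^s-1)\chi_s(T)$. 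This is consistent with the spectral-sequence split $K(s)^*(BG) = (\text{complement of }\ker\rho^*) \oplus \ker\rho^*$ used in the proof of Lemma~\ref{generatorsForG34-37}.

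For spanning, by Lemma~\ref{generatorsForG34-37} every element is a polynomial in $a, b, c, x_2, y_2, T$ over $K(s)^*(pt)$. I would reduce such a polynomial using the relations of Theorem~\ref{thm:G}. The truncations $a^{2^s} = b^{2^s} = c^{2^s} = 0$ cap the $a, b, c$ exponents. The expressions for $x_2^{2^s}$ and $y_2^{2^s}$ move high $x_2, y_2$ powers into $c$-containing monomials, absorbed into set~5 or further reduced via $a^2c = ac^2$, $b^2c = bc^2$ and $c^{2^s}=0$. The recursive formulas for $x_1, y_1$ allow trading $x_2^{2^{s-1}}$ for $x_1 + b + (\text{higher})$ and $y_2^{2^{s-1}}$ for $y_1 + c + (\text{higher})$ in the $c$-free part, explaining the cap $< 2^{s-1}$ on $x_2, y_2$-exponents in sets~1--3. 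The part-(ii) relations $x_1^{2^s} = b^{2^{s-1}}c^{2^{s-1}}$ and $y_1^{2^s} = a^{2^{s-1}}c^{2^{s-1}}$ cap $x_1, y_1$-exponents at $2^s-1$ in the $c$-free part. The relation $cT=0$ kills $cT$-products; the quadratic relation~10 reduces $T^k$ for $k \geq 2$; and relations~11, 12 force the caps $i, j < 2^s-1$ on $Tx_1^iy_1^j$ in set~4. Finally relations~6, 7 produce the specific shapes of sets~1--3.

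The main obstacle is the bookkeeping needed to verify termination and completeness of this reduction, given that the formulas for $x_1, y_1$ are self-referential (appearing on both sides of their defining equations) and interact nontrivially with the other relations. Iterating the substitutions terminates because successive iterations push terms into higher powers of $x_2, y_2$ and eventually into $c$-containing monomials bounded by $a^{2^s} = b^{2^s} = c^{2^s} = 0$. The case distinction between $G_{36}$ and $G_{37}$ affects only specific summands of the formulas and runs identically in both cases modulo these substitutions. Once every monomial has been reduced to a combination of the listed forms, the cardinality match with $\chi_s(G)$ from \cite{SCH1} upgrades spanning to a basis, completing the proof.
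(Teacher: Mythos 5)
Your proposal is correct and follows essentially the same route as the paper: monomial reduction via the relations of Theorem~\ref{thm:G} (which the paper organizes as normal forms for two lexicographic orderings, one for $K(s)^*(BG)/\ker\rho^*$ covered by your sets 1--4 and one for $\ker\rho^*$ covered by set 5), combined with the Euler characteristic count $\chi_s(G)=\frac{1}{2}16^s-\frac{1}{2}4^s+8^s$ from \cite{SCH1} to upgrade spanning to a basis. Your cardinality arithmetic and the identification of sets 1--4 with $\dim[K(s)^*(BH)]^{C_2}$ and set 5 with $(2^s-1)\chi_s(T)$ match the paper's decomposition.
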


\begin{proof}
One can verify that a) the first four sets give a basis for $K(s)^*(BG)/ker \rho^*$, and b) the last set forms a basis for $ker \rho^*$.
\newline
For a) choose the lexicographic ordering corresponding to $(T,a,b,y_2,x_2,y_1,x_1,c)$ in that order.
%
%
%
%
\newline
For b) choose the lexicographic ordering corresponding to $(T,x_1,y_1,a,b,x_2,y_2,c),lp$. To read off the required basis b)
we have to do one more thing. Namely we have replace the monomials
$x_2^jy_2^{2^{2s-1}+k}=x_2^jy_2^{2^{2s-1}}y^{k}$
by
$ac^{2^s-1}x_2^jy^{k}$, $j<2^s$, $k<2^{s-1}$.
This can be done by relations of Theorem \ref{thm:G}
$$y_2^{2^{2s-1}}=y_1^{2^{s}}=(ac)^{2^{s-1}}=ac^{2^{s}-1}.$$

Note the last set of Lemma \ref{basis36-37} is the union of two sets
\newline
$\{c^ix_2^jy_2^k,\, c^iax_2^jy_2^k, | 0<i<2^s-1\}$, a basis of $ker \rho^* \bigcap K(s)^*(BG)/Im Tr^*$, and
\newline
$\{c^{2^s-1}x_2^jy_2^k,\,\,c^{2^s-1}ax_2^jy_2^k\} $, a basis of $Tr^*(T)$, the image of the trivial module $T$. For the last sentence recall $Tr^*(1)=v_sc^{2^s-1}.$
%
%
\end{proof}

\section{Proof of Theorem \ref{thm:G34-35}}
The proof could be rather short, since it uses the same arguments.
Let us give some relations of bundles we will need. Even if no explicit use is made of these relations in
the text, it does help to have them on record should the reader be inclined
to spell out proofs for $G=G_{34},G_{35}$.

\subsection*{Bundle relations}

Let $\rho:BH \rightarrow BG$ be the double covering $\rho=\rho(H,G)$ and let $\rho_{!}\lambda=Ind^G_H(\lambda)$ and $\rho_{!}\nu=Ind^G_H(\nu)$ in both cases.

\medskip

One has for $G=G_{34}$

\begin{align*}
&\text{Determinants}                                        && \text{Product relations}  \\
&det(\rho_!\lambda)=\gamma, \,\, det(\rho_!\nu)=\gamma;             &&  iii) \,\,\,\, \alpha \rho_!\lambda=\rho_!\lambda,\,\,\,\gamma \rho_!\lambda=\rho_!\lambda; \\
&\text{Restrictions}                                        &&  iv)  \,\,\,\,  \beta \rho_!\nu=\rho_!\nu,\,\,\,\gamma \rho_!\nu=\rho_!\nu; \\
&i)\,\,\,\, \rho^*\alpha =\lambda^{2}, \,\,\,
\rho^*\beta=\nu^2, \,\,\, \rho^*\gamma=1;                     &&  v)\,\,\,\,(\rho_!\nu)^2=1 + \beta + \gamma + \beta \gamma ;\\
&ii)\,\,\,\, \rho^*(\rho_!\lambda) =\lambda +\lambda^3,\,\,\,
\rho^*(\rho_!\nu)=\nu + \nu^3;                                      && vi)\,\,\,\,(\rho_!\lambda)^2=1+ \alpha + \gamma + \alpha \gamma.                                                                                                            \end{align*}

\medskip

$G_{35}$ has the same character table as $G_{34}$. The only difference is in the determinant of $\lambda_{!}$, while restrictions, products, and the two lemmas above are the same.
$$
det(\rho_!\lambda)=1, \,\, det(\rho_!\nu)=\gamma.
$$

The first relation of iii) suggests that $\rho_!\lambda$ should be also transfer of a line bundle over the total space of a $2$-covering corresponding to $\alpha$. Namely, we have

\medskip

\begin{lemma}
\label{lem:lAMBDA-34-35}
 Let $G=G_{34}$ and $G=G_{35}$ . There exists $H'\subset G$ and a line bundle $\lambda' \to BH'$ such that $Ind_H^{G}(\lambda)=Ind_{H'}^{G}(\lambda').$ The bundle $\alpha$ restricts to trivial bundle over $BH'$.
\medskip
\end{lemma}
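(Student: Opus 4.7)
The plan is to imitate the one-sentence proof of Lemma \ref{lem:LAMBDA-36}: exhibit explicit candidates for $H'$ and $\lambda'$, verify that the induced bundle agrees with $\mathrm{Ind}_H^G(\lambda)$, and observe that $\alpha|_{BH'}$ is trivial by construction. The motivation comes from the product relation iii), $\alpha\cdot\rho_!\lambda=\rho_!\lambda$: this Clifford-style identity predicts that the 2-dimensional irreducible representation $\rho_!\lambda$ is induced from a 1-dimensional representation of the index-two subgroup $\ker(\alpha:G\to\{\pm 1\})$. Taking $H'$ to be this kernel makes the second assertion automatic.

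For $G=G_{34}$, the kernel of $\alpha$ is $H'=\langle\mathbf{a}^2,\mathbf{b},\mathbf{c}\rangle$ (the elements of even $\mathbf{a}$-exponent), so my candidate is $\lambda'(\mathbf{a}^2)=-1$, $\lambda'(\mathbf{b})=\lambda'(\mathbf{c})=1$. Since $[H',H']=\langle\mathbf{b}^2\rangle$ (because $\mathbf{c}\mathbf{b}\mathbf{c}=\mathbf{b}^{-1}$) and $\mathbf{b}^2$ already lies in $\ker\lambda'$, this is a well-defined character. For $G=G_{35}$, the relation $\mathbf{c}^2=\mathbf{a}^2$ collapses the kernel of $\alpha$ to $H'=\langle\mathbf{b},\mathbf{c}\rangle$; here I take $\lambda'(\mathbf{b})=1$ and $\lambda'(\mathbf{c})=i$. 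The latter choice is forced up to sign, because $\lambda'(\mathbf{c})^2=\lambda'(\mathbf{c}^2)=\lambda'(\mathbf{a}^2)$ must equal $\lambda(\mathbf{a}^2)=-1$ for the characters of $\mathrm{Ind}_{H'}^G(\lambda')$ and $\mathrm{Ind}_H^G(\lambda)$ on the central element $\mathbf{a}^2=\mathbf{c}^2$ to agree.

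The verification is then a straightforward character comparison on a set of conjugacy class representatives of $G$, using the formula $\mathrm{Ind}_{H'}^G(\lambda')(g)=\lambda'(g)+\lambda'(\mathbf{a}g\mathbf{a}^{-1})$ for $g\in H'$ (with coset representative $\mathbf{a}\notin H'$) and $0$ otherwise. The relations $\mathbf{c}\mathbf{a}\mathbf{c}^{\pm 1}=\mathbf{a}^{-1}$ give $\mathbf{a}\mathbf{c}\mathbf{a}^{-1}=\mathbf{a}^2\mathbf{c}$, so the $\mathbf{c}$-value is $\lambda'(\mathbf{c})(1+\lambda'(\mathbf{a}^2))=0$, matching the vanishing of $\mathrm{Ind}_H^G(\lambda)$ off $H$; the remaining classes ($1, \mathbf{a}^2, \mathbf{b}, \mathbf{b}^2, \mathbf{a}^2\mathbf{b}$, etc.) are checked in one line each.

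The only real obstacle is the $G_{35}$ case, where the non-split relation $\mathbf{c}^2=\mathbf{a}^2$ forces $\lambda'(\mathbf{c})$ to take the value $i$ rather than $\pm 1$; this is a genuine departure from the $G_{34}$ and $G_{36}$ cases and is the reason the lemma is worth stating separately.
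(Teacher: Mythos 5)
Your proposal is correct and follows essentially the same route as the paper: take $H'=\ker(\alpha)$ (so that the triviality of $\alpha|_{BH'}$ is automatic), exhibit an explicit character $\lambda'$, and match the induced characters; for $G_{34}$ your choice $H'=\langle\mathbf{a}^2,\mathbf{b},\mathbf{c}\rangle$, $\lambda'(\mathbf{a}^2)=-1$, $\lambda'(\mathbf{b})=\lambda'(\mathbf{c})=1$ is exactly the paper's. One point in your favour: your separate treatment of $G_{35}$ is not pedantry but is actually required, since in $G_{35}$ one has $\mathbf{c}^2=\mathbf{a}^2$, so the single assignment $\lambda'(\mathbf{a}^2)=-1$, $\lambda'(\mathbf{c})=1$ that the paper writes down is not a well-defined character of $\ker(\alpha)=\langle\mathbf{b},\mathbf{c}\rangle$ there; your forced value $\lambda'(\mathbf{c})=\pm i$ is the correct fix, and your character computation (in particular $\mathbf{a}\mathbf{c}\mathbf{a}^{-1}=\mathbf{a}^2\mathbf{c}$, giving vanishing off $H$) checks out.
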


\label{lem:lAMBDA-34-35}
\begin{proof}
One can choose the group $H'=\langle \mathbf{a}^2,\mathbf{b},\mathbf{c}\rangle $ and the line bundle $\lambda'$ represented by $\lambda'(\mathbf{a}^2)=-1$, $\lambda'(\mathbf{b})=1$, $\lambda'(\mathbf{c})=1$ ;  $\mu(\mathbf{a}^2)=1$, $\mu(\mathbf{b})=1$, $\mu(\mathbf{c})=-1$.
\end{proof}

Similarly we have the following lemma for $\rho_!\nu$  by the first equality of iv).

\begin{lemma}
\label{lem:NU-34-35}
 Let $G=G_{34}$ and $G=G_{35}$  There exists $H''\subset G$ and a line bundle $\nu' \to BH''$ such that $Ind_H^{G}(\nu)=Ind_{H''}^{G}(\nu').$ The bundle $\beta$ restricts to trivial bundle over $BH''$.
\medskip
\end{lemma}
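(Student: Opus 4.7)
The plan is to follow the same pattern as the proof of Lemma \ref{lem:lAMBDA-34-35}, exploiting the symmetry of $G_{34}$ and $G_{35}$ that interchanges $\mathbf{a}\leftrightarrow\mathbf{b}$ and $\lambda\leftrightarrow\nu$. The product relation iv), $\beta\rho_!\nu=\rho_!\nu$, is the guiding principle: it signals that $\rho_!\nu$ should already be an induction from the index-two subgroup classified by $\beta$.

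Concretely, I would put $H''=\langle \mathbf{a},\mathbf{b}^2,\mathbf{c}\rangle$, which is precisely the kernel of the homomorphism $G\to C_2$ classified by $\beta$. Since $\beta(\mathbf{a})=\beta(\mathbf{b}^2)=\beta(\mathbf{c})=1$, the bundle $\beta$ visibly restricts trivially to $BH''$. For the inducing bundle I would take $\nu'$ to be the one-dimensional complex character of $H''$ given by $\nu'(\mathbf{a})=\nu'(\mathbf{c})=1$ and $\nu'(\mathbf{b}^2)=-1$. A short check shows this is compatible with the defining relations of $H''$: in both $G_{34}$ and $G_{35}$ the conjugation relations $\mathbf{c}\mathbf{a}\mathbf{c}^{-1}=\mathbf{a}^{-1}$ and $\mathbf{c}\mathbf{b}^2\mathbf{c}^{-1}=\mathbf{b}^{-2}$ are respected by $\nu'$, and for $G_{35}$ the extra relation $\mathbf{c}^2=\mathbf{a}^2$ is consistent with $\nu'(\mathbf{c})^2=1=\nu'(\mathbf{a}^2)$.

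To prove the identification $Ind_H^G(\nu)=Ind_{H''}^G(\nu')$ I would compare characters of two rank-two complex representations. The key coincidence is that on the intersection $H\cap H''=\langle\mathbf{a},\mathbf{b}^2\rangle$ the restrictions of $\nu$ and $\nu'$ agree: both send $\mathbf{a}\mapsto 1$ and $\mathbf{b}^2\mapsto -1$. Using coset representatives $\{1,\mathbf{c}\}$ for $G/H$ and $\{1,\mathbf{b}\}$ for $G/H''$ together with Frobenius's induced-character formula, both characters are supported on the same set (essentially $H\cap H''$ together with the appropriate coset unions) and take equal values on each conjugacy class representative, while vanishing elsewhere. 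The computation is parallel to the one implicit in the earlier lemmas and is essentially forced once the restrictions on the intersection coincide. The only mildly delicate point, and the one to watch, is that the non-split relation $\mathbf{c}^2=\mathbf{a}^2$ in $G_{35}$ shifts some conjugacy classes compared with $G_{34}$; however, those shifts only affect elements on which both induced characters already vanish, so the verification goes through uniformly for both groups.
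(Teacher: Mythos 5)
Your choice of $H''=\langle \mathbf{a},\mathbf{b}^2,\mathbf{c}\rangle$ and of the character $\nu'$ with $\nu'(\mathbf{a})=\nu'(\mathbf{c})=1$, $\nu'(\mathbf{b}^2)=-1$ is exactly the paper's proof, which simply records these choices and leaves the character comparison implicit. Your verification via Frobenius's induced-character formula is correct and just fills in the routine check.
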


\begin{proof}
One can choose the group $H''=\langle \mathbf{a},\mathbf{b}^2,\mathbf{c} \rangle $ and the line bundle $\nu'$ represented by $\nu'(\mathbf{a})=1$, $\nu'(\mathbf{b}^2)=-1$, $\nu'(\mathbf{c})=1$.
\end{proof}

\subsection*{End of the proof}

Using the above information on complex representations of the groups $G_{34}$ and $G_{35}$, the proof is completely similar to that of Theorem \ref{thm:G} and is left to the reader.

\bigskip

\section*{Acknowledgements}
The authors are very grateful to the referee for exceptionally thorough analysis of the paper and numerous important suggestions which have been very useful for improving the paper.

\bigskip

\bibliographystyle{amsplain}

\end{document}